\newtheorem{theorem}{Theorem}
\newtheorem{lemma}[theorem]{Lemma}
\newtheorem{corollary}[theorem]{Corollary}
\newtheorem{proposition}[theorem]{Proposition}
\newtheorem{example}[theorem]{Example}
\newtheorem{remark}[theorem]{Remark}
\newcommand*{\drightleftarrow}[2]{\mathrel{
  \settowidth{\@tempdima}{$\scriptstyle#1$}
  \settowidth{\@tempdimb}{$\scriptstyle#2$}
  \ifdim\@tempdimb>\@tempdima \@tempdima=\@tempdimb\fi
  \mathop{\vcenter{
    \offinterlineskip\ialign{\hbox to\dimexpr\@tempdima+2em{##}\cr
    \rightarrowfill\cr\noalign{\kern.3ex}
    \leftarrowfill\cr}}}\limits^{\!#1}_{\!#2}}}
\newcommand*{\drightarrow}[2]{\mathrel{
  \settowidth{\@tempdima}{$\scriptstyle#1$}
  \settowidth{\@tempdimb}{$\scriptstyle#2$}
  \ifdim\@tempdimb>\@tempdima \@tempdima=\@tempdimb\fi
  \mathop{\vcenter{
    \offinterlineskip\ialign{\hbox to\dimexpr\@tempdima+2em{##}\cr
    \rightarrowfill\cr\noalign{\kern.3ex}
    \rightarrowfill\cr}}}\limits^{\!#1}_{\!#2}}}
\numberwithin{equation}{section}
\title{The principal bundles over an inverse semigroup}
\author{Ganna Kudryavtseva}
\address{Ganna~Kudryavtseva, 
Faculty of Civil and Geodetic Engineering, University of Ljubljana, Jamova 2, 1000, Ljubljana, SLOVENIA; 
Institute of Mathematics, Physics and Mechanics,
Jadranska 19,
1000, Ljubljana,
SLOVENIA 
}
\email{ganna.kudryavtseva\symbol{64}fgg.uni-lj.si}
\email{ganna.kudryavtseva\symbol{64}imfm.si}
\author{Primo\v z \v Skraba}
\address{Primo\v z \v Skraba, 
Jo\v zef  Stefan Institute,
Jamova 39,
1000, Ljubljana,
SLOVENIA }
\email{primoz.skraba\symbol{64}ijs.si}
\thanks{The first author was partially partially funded by the EU project TOPOSYS (FP7-ICT-318493-STREP) and by ARRS grant P1-0288, and the second author was funded by the EU project TOPOSYS (FP7-ICT-318493-STREP)}
\begin{document}
\maketitle

\begin{abstract} This paper is a contribution to the development of the theory of representations of  inverse semigroups in toposes. It  continues the work initiated by Funk and Hofstra ~\cite{FH}. 
 For the  topos of sets, we show that torsion-free functors on Loganathan's category $L(S)$ of an inverse semigroup $S$ are equivalent to a special class of non-strict representations of $S$, which we call connected. We show that the latter representations form a proper coreflective subcategory of the category of all non-strict representations of $S$. We describe the correspondence between directed and pullback preserving functors on $L(S)$ and transitive and effective representations of $S$, as well as between filtered such functors and universal representations introduced by Lawson, Margolis and Steinberg. We propose a definition of a universal representation, or, equivalently, an $S$-torsor, of an inverse semigroup $S$ in the topos of sheaves ${\mathsf{Sh}}(X)$ on a topological space $X$.  We prove that the category of filtered functors from  $L(S)$ to the topos ${\mathsf{Sh}}(X)$ is equivalent to the category of universal representations of $S$ in ${\mathsf{Sh}}(X)$. We finally propose a definition of an inverse semigroup action 
 in an arbitrary Grothendieck topos, which arises from a functor on $L(S)$. \end{abstract}

\section{Introduction}

The classifying topos ${\mathcal{B}}(S)$ of an inverse semigroup $S$ has recently begun to be investigated \cite{F,FH,FLS,FS,KL,St}. This topos is by definition the presheaf topos over Loganathan's category $L(S)$ of $S$. There are several equivalent characterizations of this topos, cf. \cite{F,FS,KL}. An immediate question one can ask about ${\mathcal{B}}(S)$ is ``What does ${\mathcal{B}}(S)$ classify?''  A direct application of well-known results \cite[Theorems VII.7.2, VII.9.1]{MM} of topos theory, provides the answer: for an arbitrary Grothendieck topos ${\mathcal E}$, the presheaf topos ${\mathcal{B}}(S)$ classifies filtered functors $L(S)\to {\mathcal E}$.

\begin{theorem} The category of geometric morphisms from ${\mathcal E}$ to ${\mathcal{B}}(S)$ is equivalent to the category of filtered functors $L(S)\to {\mathcal E}$.
\end{theorem}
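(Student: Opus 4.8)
The plan is to derive the statement by specializing two classical theorems of topos theory to the small category $L(S)$, so that essentially all the work reduces to recognizing $\mathcal{B}(S)$ as the presheaf topos on $L(S)$. First I would record that, by definition, $\mathcal{B}(S)$ is the presheaf topos $\widehat{L(S)} = [L(S)^{\mathrm{op}}, \mathbf{Set}]$, and that $L(S)$ is small. With this in hand, Diaconescu's theorem \cite[Theorem VII.9.1]{MM} applies to any Grothendieck topos $\mathcal{E}$ and yields an equivalence between the category of geometric morphisms $\mathcal{E} \to \widehat{L(S)}$ and the category of flat functors $L(S) \to \mathcal{E}$. Concretely, this equivalence sends a geometric morphism $f$ to the composite $f^{*} y$ of its inverse image $f^{*}$ with the Yoneda embedding $y \colon L(S) \to \widehat{L(S)}$, and conversely sends a flat functor $A$ to the (essentially unique) geometric morphism whose inverse image is the tensor functor $- \otimes_{L(S)} A$, i.e.\ the left Kan extension of $A$ along $y$.

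Next I would invoke the characterization of flat functors as filtering functors \cite[Theorem VII.7.2]{MM}: a functor from a small category into a Grothendieck topos is flat precisely when it is filtering. Since the filtered functors considered here are exactly these filtering functors, composing the two equivalences produces the asserted equivalence between geometric morphisms $\mathcal{E} \to \mathcal{B}(S)$ and filtered functors $L(S) \to \mathcal{E}$, functorially in $\mathcal{E}$.

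The step I expect to be the only substantive one is the initial identification $\mathcal{B}(S) \simeq \widehat{L(S)}$ together with the accompanying bookkeeping: checking that the variance conventions match, so that it is the covariant functors $L(S) \to \mathcal{E}$, and not functors on $L(S)^{\mathrm{op}}$, whose flatness corresponds to geometric morphisms into the presheaf topos on $L(S)$, and that the notion of filtered functor adopted here coincides with the filtering functors of \cite{MM}. Beyond this I anticipate no genuine obstacle, because $L(S)$ is manifestly small and both cited theorems hold for an arbitrary small domain category and an arbitrary Grothendieck topos; once $\mathcal{B}(S)$ is presented as $\widehat{L(S)}$, the two results apply verbatim and their composite is exactly the claimed equivalence.
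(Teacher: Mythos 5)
Your proposal is correct and follows exactly the route the paper takes: the paper likewise treats this as a direct application of \cite[Theorems VII.7.2, VII.9.1]{MM} (flat equals filtering, plus Diaconescu's theorem) to the small category $L(S)$, once $\mathcal{B}(S)$ is recognized as the presheaf topos on $L(S)$, with the correspondence sending a geometric morphism to the composite of its inverse image with the Yoneda embedding. Nothing further is needed.
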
 

The construction of the functors establishing the correspondence in the above theorem can be found in \cite{MM}. In particular, if $\gamma^*\colon {\mathcal{B}}(S)\to {\mathcal E}$ is the inverse image functor of a geometeric morphism, its composition with the Yoneda embedding $L(S)\to {\mathcal{B}}(S)$ is a filtered functor, and any such a functor is obtained this way. 

This answer, however, is not quite satisfactory. We expect more structure given that for  groups the category of filtered functors $G\to {\mathcal E}$ is equivalent to the category of $G$-{\em torsors}. The latter are just objects of ${\mathcal E}$ with a particular type of internal action of the group object obtained by applying the canonical constant sheaf functor $\Delta$ to $G$ \cite[VIII.2]{MM}. This naturally raises a question of how to define $S$-torsors, where $S$ is an inverse semigroup. The latter question was raised by Funk and Hofstra in \cite{FH}, where in \cite[Theorem 3.9]{FH} they show that, for the topos of sets,  $S$-torsors can be defined as (well-supported) transitive and free $S$-sets, where an inverse semigroup $S$-set is a homomorphism from $S$ to the symmetric inverse semigroup ${\mathcal{I}}(X)$. This description naturally generalizes  the description of $G$-torsors in the topos of sets. It is mentioned (without providing details) by Lawson, Margolis and Steinberg \cite{LMS} that $S$-torsors in the topos of sets are precisely universal representations of $S$ defined and systematically studied in~\cite{LMS}. 

Funk and Hofstra, in \cite[Definition 2.14]{FH},  proposed a definition of an $S$-torsor, where $S$ is an inverse semigroup, in an arbitrary Grothendieck topos ${\mathcal E}$. They also state an equivalence of categories between $S$-torsors in ${\mathcal E}$ and filtered functors $L(S)\to {\mathcal E}$ (\cite[Theorem 3.10]{FH}). Their approach is based on internalizing $S$ in ${\mathcal E}$ as a semigroup (rather than as an inverse semigroup). 
Implicitly,   \cite[Definition 2.14]{FH} considers actions  (in an arbitrary Grothendieck topos)  by partial bijections \cite{private_comm}. However, actions of by partial bijections in an arbitrary Grothendieck topos are not defined in \cite{FH}, nor, to the best or our knowledge, anywhere else in the literature. Therefore, 
while the main theorem \cite[Theorem 3.10]{FH} is correct, 
the ommision of this definition along with other details, can make some of the definitions (e.g., \cite[Definition 2.14]{FH}) and proofs in \cite{FH}  hard to follow or verify for the non-expert in topos theory. One of the main  goals in the present paper is to try to make the constructions as detailed, simple and explicit as possible and particularly tailored to researchers in semigroup theory. Additionally, we provide a counter-example to a claim in  \cite[Section 6]{FH} (see Section \ref{sec:fin} for details).

The  paper is structured as follows. Section~\ref{sec:prel} provides some preliminaries needed to read this paper (as well as suggestions of the literature for further reading).  In Section~\ref{sec:sets}, we focus on the topos of sets and describe (possibly non-strict) $S$-sets, where $S$ is an inverse semigroup, attached to  classes of functors from $L(S)$ to this topos. Some of these results were already given in \cite{FH}, but we provide detailed proofs. We introduce a class of  {\em connected} non-strict $S$-sets and prove that they are in a categorical equivalence with torsion-free functors on  $L(S)$ (Theorem \ref{th:equiv}). We then show that connected non-strict $S$-sets form a proper coreflective subcategory of the category of all non-strict $S$-sets which corrects \cite[Proposition 3.6]{FH} (see Example \ref{ex:ce} and Proposition \ref{prop:ce}).  We also discuss the connection of transitive and universal $S$-sets with appropriate classes of functors on $L(S)$. This in particular leads to a new perspective on the classical result due to Schein \cite{Sch} on transitive and effective representations of an inverse semigroup. In Section~\ref{sec:bundles}, we define $S$-torsors in the topos of sheaves ${\mathsf{Sh}}(X)$ over a topological space $X$ and prove that these are categorically equivalent to filtered functors $L(S)\to {\mathsf{Sh}}(X)$ (Theorem \ref{th:sheaves}).  It follows that in the topos ${\mathsf{Sh}}(X)$ the classifying topos ${\mathcal B}(S)$ classifies universal $S$-bundles.This can be seen as an instance of \cite[Theorem 3.10]{FH}, with more details which hopefully provide a better insight into why this works. 

Finally, in Section~\ref{sec:fin}, we outline an approach, which is substantially different from that used in \cite{FH}, to the notion of an $S$-set in an arbitrary Grothendieck  topos. We start from a functor on $L(S)$, construct an objects of action as a certain colimit (similarly as this is done for the topos of set) and then lift $S$ to the topos ${\mathcal H}$-class-wise, that is, we consider objects $\Delta H$, where $H$ is an ${\mathcal H}$-class of $S$ and $\Delta$ is the constant sheaf functor. It would be interesting to connect and compare this approach with the approach proposed in \cite{FH}.

An important task which remains for future investigation is to further develop the general theory of actions of inverse semigroups by partial bijections in arbitrary Grothendieck toposes extending \cite{FH} and the present paper to the level of corresponding well-established theory of group actions \cite[V.6, VIII.2]{MM}. 

\section{Preliminaries}\label{sec:prel}

For more complete exposition on inverse semigroups, we refer the reader to \cite{L}, on categories to \cite{Awo, MacL}, and on toposes to \cite{MM,M}. 

\subsection{Inverse semigroups and their representations} \label{subs:inv} Let $S$ be an inverse semigroup. By $E(S)$, we denote the semilattice of idempotents of $S$.  For $s\in S$ we write ${\mathbf{d}}(s)=s^{-1}s$ and ${\mathbf{r}}(s)=ss^{-1}$. These idempotents are abstractions of the notions of the domain and the range idempotents, respectively, of a partial bijection. The natural partial order on $S$ is defined by  $s\leq t$ if and only if $s=te$ for some $e\in E(S)$. For  $X\subseteq S$, we write
$$
X^{\uparrow}=\{s\in S\colon s\geq x \text{ for some } x\in X\}. 
$$
The set $X^{\uparrow}$ is sometimes called the {\em (upward) closure} of $X$. The set $X$ is {\em closed} if $X^{\uparrow}=X$.

For a set $X$, let ${\mathcal I}(X)$ denote the {\em symmetric inverse semigroup} on $X$ which consists of all bijections between subsets of $X$ (we refer to such maps  as {\em partial bijections}). 
If $s\in {\mathcal I}(X)$ we set ${\mathrm{dom}}(s)$ and ${\mathrm{im}}(s)$ to be the domain and the image of $s$. 

A {\em representation} of an inverse semigroup $S$ on a set $X$, is an inverse semigroup homomorphism $\theta\colon S\to {\mathcal I}(X)$. 
Given a such a representation, we have a left action of $S$ on $X$ by partial bijections such that $s\cdot x$ is defined if and only if $x\in {\mathrm{dom}}(\theta(s))$ in which case $s\cdot x= \theta(s)(x)$.
We say that $(X,\theta)$ is a {\em left} $S$-{\em set}.  Where $\theta$ is clear, we will write $(X,\theta)$ as simply $X$. Unless otherwise stated, we assume that actions are left actions, and we refer to left $S$-sets as $S$-{\em sets}. Throughout the paper, we assume that the $S$-sets are {\em effective}, meaning that for every $x\in X$ there exists some $s\in S$ such that $s\cdot x$ is defined. 

An $S$-set $(X,\mu)$ is called {\em transitive} if for any $x,y\in X$ there is $s\in S$ such that $\mu(s)(x)=y$. It is called 
{\em free}, if the equality $\mu(s)(x)=\mu(t)(x)$ implies that there is $c\leq s,t$ such that $\mu(c)(x)=\mu(s)(x)$. 
Finally, we call a transitive and free $S$-set an $S$-{\em torsor}.

\subsection{Toposes in a nutshell} By a {\em topos}, we restrict ourselves to a {\em Grothendieck topos}, that is, a category ${\mathcal{E}}$ that satisfies the {\em Giraud's axioms}. We refer the reader, for example, to \cite[1.1]{M} for a detailed introduction to the notion of a topos. For our purposes, we do not need to recount the definition of a topos. It is important however to mention the following examples of toposes:
\begin{enumerate}
\item The category ${\mathsf{Sets}}$ of sets and maps between sets.
\item The category ${\mathrm{Et}}(X)$ of \'etale spaces over a topological space $X$.
\item The category ${\mathcal B}({\mathcal{C}})$ of presheaves of sets $F\colon {\mathcal{C}}^{op} \to {\mathsf{Sets}}$ over a small category ${\mathcal{C}}$.
\end{enumerate}

Let us look at these examples at greater detail. An \'etale space over a topological space $X$ 
is a triple $(E,p,X)$ where $E$ is a topological space and $p\colon E\to X$ is a local homeomorphism. A {\em morphism}
$ (E,p,X) \to (G,q,X)$ between \'etale spaces 
is a continuous map $\alpha\colon E\to G$ such that $q\alpha=p$. Given the well known equivalence between \'etale spaces and sheaves, the topos ${\mathrm{Et}}(X)$ is equivalent to the topos ${\mathsf{Sh}}(X)$ of sheaves over $X$. From the topos of sheaves ${\mathsf{Sh}}(X)$ one can recover the frame of opens of  $X$, and thus, if $X$ is a sober space, $X$ itself can  also be recovered \cite{M, MM}. It follows that a topos can be thought of as a generalization of a (sober) topological space. Bearing this in mind, it is useful (for example, to interpret the definition of a point of a topos) to consider the topos ${\mathsf{Sets}}$ as an analogue of a one-point space. 

Turning to the third example, a {\em presheaf of sets} over a small category ${\mathcal{C}}$ is a contravariant functor $F$ from ${\mathcal{C}}$ to the category of sets ${\mathsf{Sets}}$, $F\colon {\mathcal{C}}^{op} \to {\mathsf{Sets}}$. If $\alpha\colon c\to d$ is a morphism in ${\mathcal{C}}$, then the map
$F(\alpha)\colon F(d)\to F(c)$ is called the {\em translation map} along $\alpha$.
Let $F,G\colon {\mathcal{C}}^{op} \to {\mathsf{Sets}}$ be presheaves of sets. By a morphism from $F$ to $G$, we mean a natural transformation $\pi$ from $F$ to $G$, that is, a collection of maps, $\pi_c\colon F(c)\to G(c)$, where $c$ runs through the objects of ${\mathcal{C}}$, which commute with the translation maps. The topos ${\mathcal B}({\mathcal{C}})$ is called {\em the classifying topos} of the small category ${\mathcal{C}}$. 

For a detailed verification that each of our examples satisfies the Giraud's axioms, we refer the reader to \cite{M}.

\subsection{The category of elements of a functor}

Let ${\mathcal C}$ be a small category and $P\colon {\mathcal C}\to {\mathsf{Sets}}$ a covariant functor. The {\em category of elements of} $P$ is the category $\int_{\mathcal C}P$ whose objects are all pairs $(C,p)$ where $C$ is an object of ${\mathcal C}$ and $p\in P(C)$. Its morphisms $(C,p)\to (C',p')$ are those morphisms $u\colon C\to C'$ of ${\mathcal C}$ for which $P(u)(p)=p'$. The category of elements $\int_{\mathcal C} P$ of a contravariant functor $P\colon {\mathcal C}^{op}\to {\mathsf{Sets}}$ is defined similarly.

\subsection{Filtered  and directed categories and functors}\label{subs:2.5}

A small category $I$ is called {\em filtered} if it satisfies the following axioms:
\begin{enumerate}
\item[(F1)] $I$ has at least one object.
\item[(F2)] For any two objects $i,j$ of $I$ there is a diagram $i\leftarrow k \to j$ in $I$, for some object $k$.
\item[(F3)] For any two parallel arrows $i\rightrightarrows j$ there exists a commutative diagram $k\to i\rightrightarrows j$ in $I$.
\end{enumerate}

Equivalently, a small category $I$ is filtered if for any finite diagram in $I$ there is a cone on that diagram.
A small category $I$ is called {\em directed} if it satisfies axioms (F1) and (F2) above.

A covariant functor $A\colon {\mathcal C}\to {\mathsf{Sets}}$ is called a {\em filtered functor} (resp. a {\em directed functor}) if its category of elements $\int_{\mathcal C} A$ is a filtered category (resp. a directed category). 

\subsection{Geometric morphisms} Let ${\mathcal E}, {\mathcal F}$ be toposes. A {\em geometric morphism} $f\colon {\mathcal F} \to {\mathcal E}$ consists of a pair of functors
$$
f^*\colon {\mathcal E} \to {\mathcal F} \text{ and } f_*\colon {\mathcal F} \to  {\mathcal E},
$$
called the {\em inverse image functor} and the {\em direct image functor}, respectively, such that the following two axioms are satisfied:
\begin{enumerate}
\item[(GM1)] $f^*$ is a left adjoint to $f_*$.
\item[(GM2)] $f^*$ is left exact, that is, it commutes with finite limits.
\end{enumerate}

Since $f^*$ is a left adjoint, it commutes with colimits (by the dual to the well-known RAPL theorem \cite{Awo}).
It follows from the uniqueness of adjoints that a geometric morphism $f\colon {\mathcal F} \to {\mathcal E}$ is determined by its inverse image functor $f^*\colon {\mathcal E}\to {\mathcal F}$ which is required to commute with any colimits and finite limits.

Let $X,Y$ be topological spaces and $f\colon X\to Y$ a continuous map. This  gives rise to a functor $f^*\colon {\mathrm{Et}}(Y)\to {\mathrm{Et}}(X)$, as follows.  Let $(E,p,Y)$ be an \'etale space over $Y$ and put 
$$X\times_Y E =\{(x,e)\in X\times E\colon f(x)=p(e)\}.$$
Then the projection to the first coordinate $\pi_1\colon X\times_Y E \to X$ is a local homeomorphism. Indeed, assume that $(x,e)\in X\times_Y E$ and let $A$ be a neighborhood of $e$ such that $A$ is homeomorphic to $p(A)$. 
Then the set
$$
\{(x,t)\in X\times_Y E\colon t\in A\}
$$
is homeomorphic to $f^{-1}(p(A))$ via $\pi_1$. 
The local homeomorphism $\pi_1$ is said to be obtained by {\em pulling} $p$  {\em back along} $f$. We set 
$$f^*(E,p,Y)=(X\times_Y E, \pi_1, X).$$
It is easy to see that $f^*$ preserves colimits and finite limits and thus gives rise to a geometric morphism $(f^*,f_*)$ from ${\mathrm{Et}}(X)$ to ${\mathrm{Et}}(Y)$. For sober spaces $X$ and $Y$ this construction gives rise to a bijective correspondence between continuous maps from $X$ to $Y$ and geometric morphisms from  ${\mathrm{Et}}(X)$ to ${\mathrm{Et}}(Y)$. Thus, as toposes can be looked at as generalizations of topological spaces, geometric morphisms between toposes are generalizations of continuous maps.

\subsection{The constant sheaf functor and the global section functor}

For any topos ${\mathcal E}$, there is a unique (up to isomorphism) geometric morphism $\gamma\colon {\mathcal E}\to {\mathsf{Sets}}$, given by
$$
\gamma^*(S)=\sum_{s\in S}1, \,\, \gamma_*(E)={\mathrm{Hom}}_{\mathcal E}(1,E),
$$
where $1$ denotes the terminal object of ${\mathcal E}$. The inverse image part $\gamma^*$ of $\gamma$ is usually denoted by $\Delta$ and is called the {\em constant sheaf functor}, and the direct image part $\gamma_*$ is usually denoted by $\Gamma$ and is called the {\em global section functor.} This geometric morphism may be looked at as an analogue of the only continuous map from a  topological space $X$ to a one-element topological space.

\subsection{Filtered functors and geometric morphisms}

A {\em point} of a topos ${\mathcal E}$ is a geometric morphism $\gamma: {\mathsf{Sets}}\to {\mathcal E}$. This is parallel to looking at a point of a topological space $X$ as an inclusion of a one-element space into $X$. Note that such an inclusion $i\colon \{x\}\to X$ defines a filter $F$ in $X$ consisting of those $A\in X$ such that $i(x)\in A$. We now describe how this idea can be extended to a correspondence between points of the classifying topos of a category and filtered functors on this category.

 Let ${\mathcal C}$ be a small category, and let $A\colon {\mathcal C}\to {\mathsf{Sets}}$ be a functor.  We describe a  construction to be found in \cite{MM} of a pair of adjoint functors
$f^*\colon {\mathcal{B}}({\mathcal C})\to {\mathsf{Sets}}$ and $f_*\colon {\mathsf{Sets}}\to {\mathcal{B}}({\mathcal C})$. The functor $f_*$ is easier to define and  thus we start from its description.  We have $f_*=\underline{\mathrm{Hom}}_{\mathcal C}(A,-)$, where the latter is the presheaf defined for each set  $R$ and $C\in {\mathcal C}$ by
$$
\underline{\mathrm{Hom}}_{\mathcal C}(A,R)(C)={\mathrm{Hom}}_{\mathsf{Sets}}(A(C),R).
$$

 For a presheaf $P\in {\mathcal{B}}({\mathcal C})$, we define $f^*(P)$ to be the colimit
$$
f^*(P)=\lim_{\longrightarrow}\left(\int_{\mathcal C}P\stackrel{\pi_1}{\to} {\mathcal C} \stackrel{A}{\to} {\mathsf{Sets}}\right),
$$
where $\pi_1(C,p)=C$. This colimit is the set which we denote by $P\otimes_{\mathcal{C}}A$. It is the quotient of the set
$\bigcup_{C\in {\mathcal{C}}}(P(C)\times A(C))$ by the equivalence relation $\sim$ generated by
$$
(pu,a')\sim (p,ua'), \, p\in P(C), u\colon C\to C', a'\in A(C'),
$$
where we denote $pu=P(p)(u)$ and $ua'=A(u)(a')$. We denote the elements of $P\otimes_{\mathcal{C}}A$ by $p\otimes a$ and treat them as tensors where ${\mathcal{C}}$ `acts' on $P$ on the right and on $A$ on the left.

The described adjoint pair $(f^*, f_*)$ is not in general a geometric morphism between toposes. By definition, it is a geometric morphism if and only if the tensor product functor $f^*$ is left exact. If this condition holds, the functor $A$ is called {\em flat}. Flat functors can be characterized precisely as filtered functors \cite[Theorem VII.6.3]{MM}.

Let ${\mathsf{Filt}}({\mathcal C})$ denote the category of filtered functors ${\mathcal C}\to {\mathsf{Sets}}$, where morphisms are natural transformations, and ${\mathsf{Geom}}({\mathsf{Sets}}, {\mathcal{B}}({\mathcal C}))$ the category of geometric morphisms from ${\mathsf{Sets}}$ to the classifying topos ${\mathcal{B}}({\mathcal C})$ of ${\mathcal C}$ (or, equivalently the points of ${\mathcal{B}}({\mathcal C})$), where morphisms are natural transformations between the inverse image functors. 

\begin{theorem}[{\cite[Theorem VII.5.2]{MM}}] \label{th:filt}  There is an equivalence of categories
$${\mathsf{Filt}}({\mathcal C})\, \,\drightleftarrow{\tau}{\rho}\, \,{\mathsf{Geom}}({\mathsf{Sets}}, {\mathcal{B}}({\mathcal C})) 
$$
where the functors $\tau$ and $\rho$ are defined, for a filtered functor $A\colon {\mathcal C}\to {\mathsf{Sets}}$ and a point $f\in {\mathsf{Geom}}({\mathsf{Sets}}, {\mathcal{B}}({\mathcal C})) $, by
$$
\tau(A)^*=- \otimes_{\mathcal C} A, \,\, \tau(A)_*= \underline{\mathrm{Hom}}_{\mathcal C}(A,-),
$$
$$
\rho(f)=f^*\cdot {\mathbf{y}}\colon {\mathcal C}\to {\mathcal{B}}({\mathcal C})\to {\mathsf{Sets}},
$$
where ${\mathbf{y}}$ denotes the Yoneda embedding of ${\mathcal C}$ into ${\mathcal{B}}({\mathcal C})$.
\end{theorem}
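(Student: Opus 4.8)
The plan is to exhibit $\tau$ and $\rho$ as mutually quasi-inverse functors, with the bulk of the work resting on the universal property of the presheaf topos ${\mathcal{B}}({\mathcal C})$ as the free cocompletion of ${\mathcal C}$. First I would check that $\tau$ and $\rho$ land in the stated categories. For $\tau$, the pair $(-\otimes_{\mathcal C} A)\dashv \underline{\mathrm{Hom}}_{\mathcal C}(A,-)$ is the standard tensor--hom adjunction, so (GM1) holds automatically; the left adjoint $-\otimes_{\mathcal C} A$ preserves all colimits by construction, and it is left exact precisely when $A$ is flat, which by the cited characterization \cite[Theorem VII.6.3]{MM} is equivalent to $A$ being filtered. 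Hence $\tau(A)$ satisfies (GM2) and is a genuine point of ${\mathcal{B}}({\mathcal C})$. The well-definedness of $\rho$ I would defer, since it falls out most cleanly from the reconstruction isomorphism established below.

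The heart of the argument is the two natural isomorphisms. For $\rho\tau\cong\mathrm{id}$, I would evaluate $\rho(\tau(A))$ on an object $C$, obtaining $\mathbf{y}(C)\otimes_{\mathcal C} A$, and apply the co-Yoneda (density) computation to identify this tensor with $A(C)$ naturally in $C$; tracking the effect on morphisms shows this is an isomorphism of functors, natural in $A$. For $\tau\rho\cong\mathrm{id}$, the key observation is that $-\otimes_{\mathcal C} A$ is, up to isomorphism, the \emph{unique} colimit-preserving extension of $A$ along the Yoneda embedding: since every presheaf $P$ is canonically the colimit of representables indexed by its category of elements $\int_{\mathcal C} P$, any two cocontinuous functors out of ${\mathcal{B}}({\mathcal C})$ that agree on representables agree everywhere. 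Applying this with $A=f^*\cdot\mathbf{y}$, both $\tau(\rho(f))^*=-\otimes_{\mathcal C}(f^*\mathbf{y})$ and $f^*$ are cocontinuous and agree on representables (where each returns $f^*\mathbf{y}(C)$), so they are naturally isomorphic. This reconstruction isomorphism also settles the deferred point: since $f^*$ is left exact and $f^*\cong -\otimes_{\mathcal C}(f^*\mathbf{y})$, the functor $-\otimes_{\mathcal C}(f^*\mathbf{y})$ is left exact, so $f^*\mathbf{y}$ is flat and hence filtered, confirming $\rho(f)\in{\mathsf{Filt}}({\mathcal C})$. Uniqueness of adjoints then upgrades the isomorphism of inverse image functors to an isomorphism of geometric morphisms.

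What remains is bookkeeping: verifying that the morphism-level assignments match, i.e. that natural transformations $A\Rightarrow A'$ of filtered functors correspond bijectively and functorially to natural transformations $\tau(A)^*\Rightarrow\tau(A')^*$ of inverse image functors, and that the two isomorphisms above are natural in their arguments, so that together they constitute an equivalence rather than merely a bijection on objects. I expect the main obstacle to be the uniqueness-of-cocontinuous-extension step underpinning $\tau\rho\cong\mathrm{id}$: one must make precise that the canonical presentation of a presheaf as a colimit of representables over $\int_{\mathcal C} P$ is respected by the tensor product, and that cocontinuity of $f^*$ transports this presentation correctly. This is the conceptual core on which both the reconstruction and the induced flatness of $\rho(f)$ depend; once it is in place, together with the cited equivalence flat $=$ filtered, the rest is routine naturality checking.
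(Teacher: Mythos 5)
The paper does not prove this statement: it is quoted verbatim as \cite[Theorem VII.5.2]{MM} and used as background, so there is no in-paper argument to compare against. Your sketch correctly reproduces the standard proof from the cited source --- tensor--hom adjunction plus the flat-equals-filtered characterization for well-definedness of $\tau$, the co-Yoneda computation $\mathbf{y}(C)\otimes_{\mathcal C}A\cong A(C)$ for $\rho\tau\cong\mathrm{id}$, and uniqueness of the cocontinuous extension along the Yoneda embedding for $\tau\rho\cong\mathrm{id}$ and for the flatness of $f^*\mathbf{y}$ --- and I see no gap in it beyond the routine naturality checks you already flag.
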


We remark that  Theorem \ref{th:filt} remains valid in a wider setting where the topos ${\mathsf{Sets}}$ is replaced by an arbitrary topos ${\mathcal E}$. To formulate this result, known as Diaconescu's theorem, one needs a suitable definition of a filtered functor from a small category to a topos, such that being filtered is equivalent to being flat, cf. \cite[VII.8]{MM}. For our purposes, we will need filtered functors to the topos of sheaves over a topological space, which we discuss in Section \ref{sec:bundles}.

\subsection{Principal group bundles and group torsors}
The connection between filtered functors on a small category and geometric morphisms to the classifying topos is a well known and fundamental result in topos theory. In the special case where the category is a group, denote it by $G$, it is known \cite[VIII.2]{MM} that the category of filtered functors $G\to {\mathcal E}$, where ${\mathcal E}$ is an arbitrary topos, is equivalent to the category of so-called $G$-torsors over ${\mathcal E}$. A $G$-{\em torsor over}  ${\mathcal E}$ is an object $T$ of ${\mathcal E}$ equipped with an internal action (cf. \cite[V.6]{MM}) of a group object $\Delta G$ on it which satisfies some technical conditions. We remind the reader that $\Delta G$ is the value of the constant sheaf functor $\Delta\colon {\mathsf{Sets}}\to {\mathcal E}$ on $G$. Since $G$ has a structure of  a group,  $\Delta G$ inherits a structure of an internal group in ${\mathcal E}$. The aforementioned technical conditions arise as an abstraction of the well-known notion of a $G$-torsor in the topos of sheaves over a topological space $X$. In this setting, a $G$-torsor is just synonymous with a principal $G$-bundle.  A {\em principal} $G$-{\em bundle} over a topological space $X$ can be characterized as an \'etale space $(E,p,X)$ with a continuous action $G\times E\to E$ over $X$ such that
\begin{enumerate}
\item[(i)] For each point $x\in  X$ the stalk $E_x=p^{-1}(x)$ is non-empty;
\item[(ii)] Stalks are invariant under the action;
\item[(iii)] The action map $G\times E_x \to E_x$ on each stalk $E_x$ is free meaning that $g\cdot x=x$ implies that $g$ is the identity element;
\item[(iv)] The action map $G\times E_x \to E_x$ on each stalk $E_x$ is free transitive, meaning that for every $a,b\in E_x$ there exists some $g\in G$ such that $g\cdot a=b$. 
\end{enumerate}
A $G$-torsor in the topos of sets is a set $X$ equipped with a free and transitive action of $G$ on it. Such a set is in a bijection with $G$ and the action is equivalent to the action of $G$ on itself by left translations (this is a direct consequence of the elementary fact that a transitive group action is equivalent to the left action on the set of cosets over the stabilizer of any point). 
 In particular, up to isomorphism, there is only one $G$-torsor in the topos of sets.

The equivalence between filtered functors $G\to {\mathcal E}$ and $G$-torsors over ${\mathcal{E}}$, together with Theorem \ref{th:filt},  yields the result that the classifying topos ${\mathcal{B}}(G)$ classifies $G$-torsors in the sense that for an arbitrary topos ${\mathcal E}$, there is a categorical equivalence between geometric morphisms ${\mathcal E}\to {\mathcal{B}}(G)$ and $G$-torsors over ${\mathcal{E}}$. This parallels the topological result that the classifying space of $G$ classifies principal $G$-bundles.

\section{Covariant functors $L(S)\to {\mathsf{Sets}}$ vs representations of $S$ in ${\mathsf{Sets}}$}\label{sec:sets}
 
The relationship between various classes of (possibly non-strict) representations of an inverse semigroup $S$ in the topos of sets and covariant functors $L(S)\to {\mathsf{Sets}}$  was first observed and studied by Funk and Hofstra in \cite{FH}. 
In particular, they observe that filtered functors on $L(S)$ correspond to representations of $S$ which are transitive and free (\cite[Theorem 3.9]{FH}, though these representations are wrongly referred to as another kind of representations). They also consider torsion-free and pullback preserving functors. In this section we prove that torsion-free functors on $L(S)$ correspond to a class of $S$-sets which we call {\em connected}. (This corrects an inaccuracy in  \cite[Proposition 3.6]{FH}.)  
We also put in correspondence directed functors on $L(S)$ and transitive effective representations of $S$, providing a different approach to the classical theory due to Schein \cite{Sch}. Finally, we explain that filtered functors on $L(S)$ correspond to a class of representations, called {\em universal}, which were introduced and studied by Lawson, Margolis and Steinberg in \cite{LMS}.

\subsection{Torsion-free functors and connected non-strict representations} 

A map $\varphi\colon S\to T$ between inverse semigroups is called a {\em prehomomorphism} if $\varphi(ab)\leq \varphi(a)\varphi(b)$ for any $a,b\in S$. A prehomomorphism $S\to {\mathcal{I}}(X)$ will be called a {\em non-strict} representation of $S$. Similarly as representations correspond to $S$-sets, non-strict representations correspond to {\em non-strict} $S$-{\em sets}\footnote{Note that in \cite{FH} non-strict $S$-sets are referred to as $S$-sets, and  $S$-sets are referred to as strict $S$-sets.}, where the latter means a set $X$ together with a partial map $S\times X \to X$, $(s,x)\mapsto s\cdot x$, where defined, such that if $st\cdot x$ is defined then $t\cdot x$ and $s\cdot (t\cdot x)$ are defined and $st\cdot x= s\cdot (t\cdot x)$. Just as $S$-sets, the non-strict $S$-sets we consider are effective.

 The following constructions connecting non-strict $S$-sets and some covariant functors on $L(S)$ were introduced in \cite{FH}. We give here their slightly different but equivalent description. We also provide more details and notice the  property of connectedness. 
 
 Let $(X,\mu)$ be a non-strict $S$-set where $(s,x)\mapsto \mu(s,x)=s\cdot x$, where defined. For each $e\in E(S)$ let $\Phi(X,\mu)(e)$ be the domain of the action of $e$, that is to say,
$$
\Phi(X,\mu)(e)=\{x\in X\colon e\cdot x\text{ is defined}\}.
$$

If $(f,s)$ is an arrow in $L(S)$, we define $\Phi(X,\mu)(f,s)$ to be the map from $\Phi(X,\mu)({\mathbf{d}}(s))$ to $\Phi(X,\mu)(f)$ given by $x\mapsto s\cdot x$. Since $e\cdot x$ is defined and $e={\mathbf{d}}(s)=s^{-1}s$, we have that $s\cdot x$ is defined. 
Observe that $s \cdot x=(fs)\cdot x$, so that $f\cdot (s\cdot x)$ is defined. Thus $s\cdot x\in \Phi(X,\mu)(f)$.  We have constructed the covariant functor $\Phi(X,\mu)$ on $L(S)$. We need to record that the functor $\Phi(X,\mu)$ has one important property. We first define this property. 

 Assume that $F\colon L(S)\to {\mathsf{Sets}}$ is a functor and put $\Psi(F)$ to be the colimit of the following composition of functors:
$$
E(S)\longrightarrow L(S)\stackrel{F}{\longrightarrow} {\mathsf{Sets}}.$$

 This colimit is, by definition, equal to the quotient set
\begin{equation}\label{eq:colimit} \Psi(F)= \left(\bigcup_{e\in E(S)}\{e\}\times F(e)\right)/\sim,
\end{equation}
where the equivalence $\sim$ on $\bigcup_{e\in E(S)}\{e\}\times F(e)$ is generated by $(e,x)\sim (e',F(e',e)(x))$.  
The functor $F$ is called {\em torsion-free} if $(e,x)\sim (e,y)$ implies that $x=y$.

\begin{lemma} The constructed functor $\Phi(X,\mu)\colon L(S)\to {\mathsf{Sets}}$ is torsion-free.
\end{lemma}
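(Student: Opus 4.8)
My plan is to unwind the definitions and show directly that the torsion-free condition for $\Phi(X,\mu)$ is forced by the way a (non-strict) $S$-set acts. Recall that $\Phi(X,\mu)(e)$ is the domain of the action of the idempotent $e$, and for an arrow $(f,s)$ in $L(S)$ the translation map sends $x\mapsto s\cdot x$. To prove torsion-freeness I must show that if $(e,x)\sim (e,y)$ in the colimit $\Psi(\Phi(X,\mu))$, then $x=y$, where $x,y\in \Phi(X,\mu)(e)$.

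First I would analyze the equivalence relation $\sim$ concretely. The relation is generated by pairs $(e,x)\sim(e',F(e',e)(x))$, where I write $F=\Phi(X,\mu)$. Since the relevant arrows in $L(S)$ between idempotents are of the form $(e',e)$ with $e'\geq e$ giving the inclusion-type translation, the key observation is what the translation map $F(e',e)$ actually does. By the construction, $F(e',e)$ is the map $\Phi(X,\mu)(\mathbf{d}(e))=\Phi(X,\mu)(e)\to \Phi(X,\mu)(e')$ given by $x\mapsto e\cdot x$; but since $e$ acts as (a restriction of) the identity wherever defined, this map is in fact the \emph{identity-on-elements} inclusion $x\mapsto x$, viewing $\Phi(X,\mu)(e)\subseteq \Phi(X,\mu)(e')$ whenever $e\leq e'$. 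The crucial point is that all these generating translations are realized by the single underlying partial action on the set $X$, so each generator of $\sim$ identifies an element with \emph{itself} as an element of $X$.

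From this the result follows quickly. I would argue that the map sending the class of $(e,x)$ to the underlying element $x\in X$ is well defined on $\Psi(F)$: every generating relation $(e,x)\sim(e',F(e',e)(x))$ has $x$ and $F(e',e)(x)$ equal as elements of $X$ (both equal $x$, since the translation is the identity on elements), and hence the assignment $(e,x)\mapsto x$ respects $\sim$ and descends to a function $\Psi(F)\to X$. Consequently, if $(e,x)\sim(e,y)$ then $x$ and $y$ have the same image under this well-defined function, i.e.\ $x=y$ as elements of $X$, which is exactly the torsion-free condition.

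The main obstacle, and the step requiring care, is the first one: verifying rigorously that the translation maps $F(e',e)$ (and more generally that the chains of translations appearing in a general $\sim$-relation) act as the identity on the underlying set elements rather than as genuinely new maps. This rests on the non-strict $S$-set axioms---specifically that $e\cdot x$ is defined forces $s\cdot x$ to be defined for the relevant $s$, and that idempotents act as partial identities---so I would want to spell out the computation $e\cdot x = x$ whenever $e\cdot x$ is defined and confirm the composition of such maps along any zig-zag generating $\sim$ preserves the underlying element. Once that is nailed down, the descent of the map $(e,x)\mapsto x$ and hence torsion-freeness is routine.
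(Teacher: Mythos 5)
Your proof is correct and follows essentially the same route as the paper: the paper's argument is precisely the observation that the generating relations of $\sim$ satisfy $\Phi(X,\mu)(e',e)(x)=e\cdot x=x$, so every generator identifies pairs with the same underlying element of $X$, forcing $x=y$ whenever $(e,x)\sim(e,y)$. Your extra step of packaging this as a well-defined map $\Psi(F)\to X$, $(e,x)\mapsto x$, is a clean way of making the "follow the zig-zag" conclusion rigorous, but it is the same idea.
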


\begin{proof} This follows from the definition of $\sim$ since 
$$\Phi(X,\mu)(e',e)(x)=e'\cdot x=e\cdot x=x$$ for any $e'\geq e$ in $E$ and any $x\in  X$ such that $e\cdot x$ is defined.
\end{proof}

We have therefore assigned to $(X,\mu)$ a torsion-free functor $\Phi(X,\mu)\colon L(S)\to {\mathsf{Sets}}$.
We now describe the reverse direction. Assume that $F$ is a torsion-free functor $L(S)\to {\mathsf{Sets}}$. By $[e,x]$ we will denote the $\sim$-class of $(e,x)$. 
For $s\in S$ and $\alpha\in \Psi(F)$ we define
\begin{equation}\label{eq:action} s\circ\alpha=\left\lbrace\begin{array}{ll}[{\mathbf{r}}(s), F({\mathbf{r}}(s),s)(x)],& \text{ if } \alpha=[{\mathbf{d}}(s),x];\\ \text{undefined,} & \text{otherwise.} \end{array}\right.
\end{equation}

If $\alpha\in \Psi(F)$ we define

\begin{equation}\label{eq:connected}\pi_1(\alpha)=\{e\in E\colon \text{ there is some }(e,x)\in \alpha\}=\{e\in E\colon e\circ \alpha \text{ is defined}\}.
\end{equation}
It follows that $s\circ \alpha$ is defined if and only if ${\mathbf{d}}(s)\in\pi_1(\alpha)$.

\begin{lemma}\label{lem:lem3}\mbox{}
\begin{enumerate}
\item The map $\alpha\mapsto s\circ\alpha$, given by \eqref{eq:action}, is injective on its domain.
\item The assignment \eqref{eq:action} defines on $\Psi(F)$ the structure of a non-strict $S$-set $(\Psi(F), \nu)$.
\item  For any $\alpha\in \Psi(F)$ and $e,f\in \pi_1(\alpha)$,  there are $$e=e_1,e_2,\dots, e_k=f$$ in $\pi_1(\alpha)$ such that $e_i\geq e_{i+1}$ or $e_i\leq e_{i+1}$ for all admissible $i$. 
\end{enumerate} 
\end{lemma}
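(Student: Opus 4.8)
The plan is to prove the three claims in sequence, since later parts lean on earlier ones.

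For part (1), I would show that the map $\alpha \mapsto s\circ\alpha$ is injective on its domain $\{\alpha : \mathbf{d}(s)\in\pi_1(\alpha)\}$. Suppose $s\circ\alpha = s\circ\beta$ where $\alpha=[\mathbf{d}(s),x]$ and $\beta=[\mathbf{d}(s),y]$; then $[\mathbf{r}(s),F(\mathbf{r}(s),s)(x)] = [\mathbf{r}(s),F(\mathbf{r}(s),s)(y)]$. The key is that the arrow $(\mathbf{r}(s),s)$ in $L(S)$ is an isomorphism: its inverse should be $(\mathbf{d}(s),s^{-1})$, so $F$ applied to it gives a bijection between $F(\mathbf{d}(s))$ and $F(\mathbf{r}(s))$. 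Applying torsion-freeness at $\mathbf{r}(s)$ gives $F(\mathbf{r}(s),s)(x)=F(\mathbf{r}(s),s)(y)$, and then invertibility of $F(\mathbf{r}(s),s)$ forces $x=y$, whence $\alpha=\beta$. The first thing I would verify carefully is that $(\mathbf{r}(s),s)$ really is an isomorphism in $L(S)$, since the precise composition law in Loganathan's category has not been spelled out in this excerpt; this is the one place where I must pin down the definition of $L(S)$.

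For part (2), I would check that $\nu$ defined by \eqref{eq:action} satisfies the non-strict $S$-set axiom: whenever $st\circ\alpha$ is defined, both $t\circ\alpha$ and $s\circ(t\circ\alpha)$ are defined and $st\circ\alpha = s\circ(t\circ\alpha)$. Suppose $st\circ\alpha$ is defined, so $\alpha=[\mathbf{d}(st),x]$ with $\mathbf{d}(st)\in\pi_1(\alpha)$. I would use $\mathbf{d}(st) = (st)^{-1}st = t^{-1}s^{-1}st \le t^{-1}t = \mathbf{d}(t)$, so that $\mathbf{d}(t)\in\pi_1(\alpha)$ as well (membership in $\pi_1(\alpha)$ is upward-closed among idempotents appearing in representatives, by the generating relation of $\sim$), making $t\circ\alpha$ defined. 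I would then compute both $st\circ\alpha$ and $s\circ(t\circ\alpha)$ as $\sim$-classes and match them using functoriality of $F$ together with the factorization of the relevant $L(S)$-arrows; the equality $\mathbf{r}(t)\le \mathbf{d}(s)$ on the appropriate restriction is what guarantees $s\circ(t\circ\alpha)$ is defined and that the composite translation maps agree. Effectiveness of the resulting $S$-set follows because each $\alpha$ contains some $(e,x)$, and $e\circ\alpha$ is then defined.

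For part (3), the claim is that $\pi_1(\alpha)$ is connected in the comparability graph on $E(S)$. Here I would argue directly from the construction of $\sim$ as the equivalence relation \emph{generated} by the elementary moves $(e,x)\sim(e',F(e',e)(x))$ with $e'\ge e$. Any two representatives $(e,x)$ and $(f,y)$ of the same class $\alpha$ are linked by a finite chain of such elementary moves, and each elementary move changes the idempotent coordinate between comparable idempotents $e_i$ and $e_{i+1}$ (one $\ge$ the other). Reading off the idempotent coordinates along this chain yields exactly the required sequence $e=e_1,\dots,e_k=f$ with consecutive terms comparable, all lying in $\pi_1(\alpha)$ since each appears in a representative of $\alpha$. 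The main obstacle across the whole lemma is confirming the combinatorics of $L(S)$—specifically the composition and the inverses of arrows $(f,s)$—because the injectivity in (1) and the axiom-checking in (2) both hinge on identifying which arrows are isomorphisms and on the order relations $\mathbf{d}(st)\le\mathbf{d}(t)$ and $\mathbf{r}(t)\le\mathbf{d}(s)$; once those are in hand, parts (1)–(3) are short.
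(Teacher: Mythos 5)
Your proposal is correct, and for parts (1) and (3) it matches the paper's (very terse) argument: injectivity from torsion-freeness plus the fact that $({\mathbf{r}}(s),s)$ is invertible in $L(S)$ with inverse $({\mathbf{d}}(s),s^{-1})$, and connectedness of $\pi_1(\alpha)$ read off from the chain of generating relations for $\sim$. The one place you diverge is part (2): the paper does not verify the axiom ``$st\circ\alpha$ defined $\Rightarrow$ $t\circ\alpha$ and $s\circ(t\circ\alpha)$ defined and equal'' for arbitrary $s,t$, but instead invokes the factorization criterion that $\varphi$ is a prehomomorphism iff $\varphi(st)=\varphi(s)\varphi(t)$ whenever ${\mathbf{r}}(t)={\mathbf{d}}(s)$ and $\varphi(ef)\leq\varphi(e)\varphi(f)$ for idempotents $e,f$; both special cases are immediate from \eqref{eq:action} (in the first, ${\mathbf{d}}(st)={\mathbf{d}}(t)$ and ${\mathbf{r}}(st)={\mathbf{r}}(s)$, and in the second everything acts as a partial identity, with the domain inclusion coming from upward-closure of $\pi_1(\alpha)$). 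Your direct route works too, but the step you leave vague --- that $s\circ(t\circ\alpha)$ is defined whenever $st\circ\alpha$ is, for \emph{arbitrary} $s,t$ --- is exactly the part the paper's decomposition avoids: you need to push the representative at ${\mathbf{d}}(st)$ forward along $t$ to see that ${\mathbf{d}}(s){\mathbf{r}}(t)$ (hence, by upward-closure, ${\mathbf{d}}(s)$) lies in $\pi_1(t\circ\alpha)$; the phrase ``the equality ${\mathbf{r}}(t)\le{\mathbf{d}}(s)$ on the appropriate restriction'' should be replaced by this argument. With that filled in, your proof is complete; the paper's criterion just buys a shorter verification.
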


\begin{proof} (1) Follows from  \eqref{eq:action}, since $F$ is torsion-free and thus all the translation maps are injective.  

(2) We use the fact that a map $\varphi\colon S\to T$ between inverse semigroups is a prehomomorphism if and only if $\varphi(st)=\varphi(s)\varphi(t)$ for any $s,t$ such that ${\mathbf{r}}(t)={\mathbf{d}}(s)$ and $\varphi(ef)\leq \varphi(e)\varphi(f)$ for any $e,f\in E(S)$. It is immediate from \eqref{eq:action} that both of these conditions hold for  $\Psi(F)$.

(3) Follows from the construction of $\Psi(F)$ and \eqref{eq:action}.

\end{proof}

Since the set $\pi_1(\alpha)$ is expressable in terms of the action, as is given in \eqref{eq:connected}, we can define a non-strict $S$-set $X$, $(s,x)\mapsto s\cdot x$, where defined, to be {\em connected} if for any $x\in X$ and any $e,f\in E$ such that $e\cdot x$ and $f\cdot x$ are defined, there is a sequence of idempotents $e=e_1,e_2,\dots, e_k=f$, called a {\em connecting sequence over} $x$, such that $e_i\cdot x$ is defined and  $e_i\geq e_{i+1}$ or $e_i\leq e_{i+1}$ for all admissible $i$.

\begin{example} {\em If $X$ is an $S$-set (that is, given by a homomomorphism),  it is connected with  $e,ef,f$ being a connecting sequence between $e$ and $f$ over any $x$ such that $e\cdot x$ and $f\cdot x$ are defined. }
\end{example}

\begin{example} {\em If $S$ is a monoid,  any non-strict $S$-set is connected with $e,1,f$ being a connecting sequence between $e$ and $f$, again over any $x$ such that $e\cdot x$ and $f\cdot x$ are defined.}
\end{example}

It is not true that every non-strict $S$-set is connected, as the following  example shows.

\begin{example}\label{ex:ce} {\em Let $S=\{e,f,g\}$ be a three-element semilattice, given by the following Hasse diagram:
\begin{center}
\begin{tikzpicture}[scale=2]
\node (e) {$e$};
\node(aux) [node distance=0.6cm, right of=e] {};
\node (f) [node distance=1.2cm, right of=e] {$f$};
\node (g) [node distance=1cm, below of=aux] {$g$};
\path[-]
(e) edge node[above]{} (g)
(f) edge node[above]{} (g);
\end{tikzpicture}
\end{center}

Let $X=\{1,2\}$ and define the domains of action of $e$ and $f$ to be equal $\{1,2\}$, and the domain of action of $g$ to be equal $\{1\}$ (that is, $e$ and $f$ act by the identity map on $\{1,2\}$, and $g$ by the identity map on $\{1\}$). Thus $X$ becomes a non-strict $S$-set. It is however not connected, as both $e\cdot 2$ and $f\cdot 2$ are defined but there is no connecting sequence between $e$ and $f$ over $2$ as $g\cdot 2$ is undefined.}
\end{example}

In view of Lemma \ref{lem:lem3}, it follows that the non-strict $S$-set from Example \ref{ex:ce} can not be equal $\Psi(F)$ for any torsion-free functor $F$ on $L(S)$.  

We now describe the correspondence between morphisms of non-strict  $S$-sets and natural transformations of torsion-free functors on $L(S)$.  Assume we are given non strict $S$-sets $(X,\mu)$, $(s,x)\mapsto s\cdot x$, where defined, and $(Y,\nu)$, $(s,x)\mapsto s\circ x$, where defined.  A {\em morphism} from $(X,\mu)$ to $(Y,\nu)$ is  
a map $f\colon X\to Y$ such that if $s\cdot x$ is defined then $s\circ  f(x)$ is also defined and
$$
f(s\cdot x) = s\circ (f(x)).
$$

Let $f\colon (X,\mu)\to (Y,\nu)$ be a morphism, $e\in E$ and $x\in \Phi(X, \mu)(e)$. Then $f(x)\in \Phi(Y, \nu)(e)$ which defines a map 
$$\widetilde{f}_e\colon \Phi(X, \mu)(e)\to \Phi(Y, \nu)(e).$$
It is immediate that the maps $\widetilde{f}_e$ commute with the translation maps along any $(f,s)\in L(S)$ and thus define a natural transformation $\widetilde{f}\colon \Phi(X, \mu)\to \Phi(Y, \nu)$. We set $\Phi(f)=\widetilde{f}$.

In the reverse direction, let $F$ and $F'$ be torsion-free functors $L(S)\to {\mathsf{Sets}}$ and let
$\alpha\colon F\to F'$ be a natural transformation. Let $\alpha(e)$ denote the component of $\alpha$ at $e$. Further, let $\sim$ denote the congruence on the set $\bigcup_{e\in E(S)}\{e\}\times F(e)$ which defines the set $\Psi(F)$, and $\sim'$ denote a similar congruence which defines the set $\Psi(F')$.
\begin{lemma}
Let $x\in F(e)$, $y\in F(f)$ and $(e,x)\sim (f,y)$. Then $(e,\alpha(e)(x))\sim' (f,\alpha(f)(y))$.
\end{lemma}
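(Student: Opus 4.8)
The plan is to unwind the definition of the congruence $\sim$ and show that $\alpha$ respects the generating relations, from which the claim follows for the full equivalence. Recall that $\sim$ on $\bigcup_{e\in E(S)}\{e\}\times F(e)$ is generated by the elementary relations $(e,x)\sim (e',F(e',e)(x))$ for $e'\geq e$ (arising from the inclusion functor $E(S)\to L(S)$ applied under $F$). Since $\sim$ is the equivalence relation generated by these, any pair $(e,x)\sim (f,y)$ is linked by a finite zig-zag of elementary steps. So first I would reduce to the case of a single generating step, and then argue by induction on the length of the zig-zag, using that $\sim'$ is itself an equivalence relation (hence transitive and symmetric) to stitch the steps together.

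The core computation is the single-step case: suppose $e'\geq e$ and $y=F(e',e)(x)$, so that the generating relation reads $(e,x)\sim(e',y)$. I must check that $(e,\alpha(e)(x))\sim'(e',\alpha(e')(y))$. The key input is the naturality of $\alpha$ applied to the morphism $(e',e)$ of $L(S)$ corresponding to $e'\geq e$: naturality gives the commuting square
\begin{equation}\label{eq:nat}
\alpha(e')\circ F(e',e)=F'(e',e)\circ \alpha(e).
\end{equation}
Evaluating \eqref{eq:nat} at $x$ yields $\alpha(e')(y)=\alpha(e')(F(e',e)(x))=F'(e',e)(\alpha(e)(x))$. But then $(e,\alpha(e)(x))$ and $(e',\alpha(e')(y))=(e',F'(e',e)(\alpha(e)(x)))$ are related by exactly a generating relation of $\sim'$, hence are $\sim'$-equivalent. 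This settles the single-step case.

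For the general case, write $(e,x)\sim(f,y)$ as a chain
$$
(e,x)=(g_0,z_0),\ (g_1,z_1),\ \dots,\ (g_n,z_n)=(f,y),
$$
where each consecutive pair $(g_i,z_i)$, $(g_{i+1},z_{i+1})$ is related by a single generating step (in one direction or the other). Applying the single-step argument to each consecutive pair---and using symmetry of $\sim'$ when a step runs in the reverse direction---gives $(g_i,\alpha(g_i)(z_i))\sim'(g_{i+1},\alpha(g_{i+1})(z_{i+1}))$ for every $i$. Chaining these together by transitivity of $\sim'$ yields $(e,\alpha(e)(x))\sim'(f,\alpha(f)(y))$, as desired. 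I do not expect a genuine obstacle here; the one point requiring care is the bookkeeping of directions in the zig-zag, namely handling generating steps that are traversed backwards, which is why I invoke symmetry of $\sim'$ explicitly rather than relying only on the generators.
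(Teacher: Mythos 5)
Your proof is correct and follows essentially the same route as the paper: reduce to a single generating step and apply naturality of $\alpha$ to the morphism $(e',e)$, which is exactly the paper's computation $\alpha(f)(F(f,e)(x))=F'(f,e)(\alpha(e)(x))$. The only difference is that you spell out the zig-zag induction that the paper compresses into ``without loss of generality.''
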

\begin{proof}
Without loss of generality, we may assume that $e\leq f$ and that $y=F(f,e)(x)$. Since compotents of $\alpha$ commute with the translation maps, we can write
$$
\alpha(f)(y)=\alpha(f)(F(f,e)(x))=F'(f,e)(\alpha(e)(x)),
$$
which yields that $(e,\alpha(e)(x))\sim' (f,\alpha(f)(y))$. 
\end{proof}

The proved lemma shows that the assignment $[e,x]\mapsto [e,\alpha(e)(x)]$ results in a well-defined map 
$$
\widetilde{\alpha}\colon \Psi(F)\to \Psi(F').
$$

\begin{lemma}
The map $\widetilde{\alpha}$ is a morphism of non-strict  $S$-sets.
\end{lemma}

\begin{proof} 
Let the structure of  an $S$-set on $\Psi(F)$ (defined in \eqref{eq:action}) be given by $(s,\alpha)\mapsto s\circ \alpha$, where defined, and that on $\Psi(F')$ be given by $(s,\alpha)\mapsto s*\alpha$, where defined.
Let $s\in S$ and assume that $s\circ [e,x]$ is defined. We may then assume that $e={\mathbf{d}}(s)$.
Then $\widetilde{\alpha}([{\mathbf{d}}(s),x])=[{\mathbf{d}}(s),\alpha({\mathbf{d}}(s))(x)]$ showing that $s*\widetilde{\alpha}([{\mathbf{d}}(s),x])$ is defined, too. The proof is completed by the following calculation using the fact that components of $\alpha$ commute with the translation maps:
$$
\widetilde{\alpha}(s\circ ([{\mathbf{d}}(s),x]))=\widetilde{\alpha}([{\mathbf{r}}(s),F({\mathbf{r}}(s),s)(x)])=
[{\mathbf{r}}(s), \alpha({\mathbf{r}}(s))F({\mathbf{r}}(s),s)(x)];
$$
$$
s*\widetilde{\alpha}([{\mathbf{d}}(s),x])=s*[{\mathbf{d}}(s), \alpha({\mathbf{r}}(s))(x)]=[{\mathbf{r}}(s),F'({\mathbf{r}}(s),s)(\alpha({\mathbf{d}}(s),x))].
$$
\end{proof}

We set $\Psi(\alpha)=\widetilde{\alpha}$.
Let ${\mathsf{Repr}}(S)$ denote the category of all non-strict  $S$-sets, ${\mathsf{ConRepr}}(S)$ the category of all connected non-strict $S$-sets and ${\mathsf{TF}}(L(S))$  the category of torsion-free functors on $L(S)$.

It is routine to verify that the assignments $\Phi\colon {\mathsf{Repr}}(S)\to {\mathsf{TF}}(L(S))$ and $\Psi\colon {\mathsf{TF}}(L(S))\to {\mathsf{ConRepr}}(S)$ are functorial.  We denote the restriction of the functor $\Phi$ to the category ${\mathsf{ConRepr}}(S)$ by $\Phi'$. 
We obtain the following result.

\begin{theorem}\label{th:equiv} There is an equivalence of categories
$${\mathsf{ConRepr}}(S)  \,\,
\drightleftarrow{\Phi'}{\Psi} \,\, {\mathsf{TF}}(L(S)).
$$
\end{theorem}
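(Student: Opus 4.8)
The plan is to produce two natural isomorphisms, $\Psi\circ\Phi'\cong \mathrm{Id}_{\mathsf{ConRepr}(S)}$ and $\Phi'\circ\Psi\cong \mathrm{Id}_{\mathsf{TF}(L(S))}$; since $\Phi'$ and $\Psi$ are already known to be functorial, producing these two isomorphisms is exactly what is needed to conclude the equivalence. Throughout I will use the fact already recorded in the proof that $\Phi(X,\mu)$ is torsion-free, namely that in any non-strict $S$-set one has $e\cdot x=x$ whenever $e\in E$ and $e\cdot x$ is defined. I will also use, from Lemma \ref{lem:lem3}(3), that $\Psi(F)$ is indeed connected, so that the composite $\Phi'\circ\Psi$ is well formed.

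For the counit direction, given a connected non-strict $S$-set $(X,\mu)$, I would define $\Theta_{(X,\mu)}\colon \Psi(\Phi'(X,\mu))\to X$ by $[e,x]\mapsto x$. First I would check well-definedness: by \eqref{eq:colimit} the relation $\sim$ is generated by $(e,x)\sim(e',\Phi'(X,\mu)(e',e)(x))$ for $e\le e'$, and here $\Phi'(X,\mu)(e',e)(x)=e\cdot x=x$, so both sides of a generating relation have the same $\Theta$-image. Surjectivity follows from effectiveness: for $x\in X$ choose $s$ with $s\cdot x$ defined; since $s\,{\mathbf{d}}(s)=s$, the non-strict axiom forces ${\mathbf{d}}(s)\cdot x$ to be defined, so $x\in\Phi'(X,\mu)({\mathbf{d}}(s))$ and $\Theta([{\mathbf{d}}(s),x])=x$. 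The essential point is injectivity, and this is exactly where connectedness enters: if $\Theta([e,x])=\Theta([f,y])$ then $x=y$ in $X$ with $e\cdot x$ and $f\cdot x$ both defined, and a connecting sequence $e=e_1,\dots,e_k=f$ over $x$ furnishes a chain of generating relations $(e_i,x)\sim(e_{i+1},x)$ (again using $e_i\cdot x=x$), whence $[e,x]=[f,x]$. Finally I would verify that $\Theta$ is a morphism of non-strict $S$-sets, matching definedness and values of the actions through \eqref{eq:action} and \eqref{eq:connected}, and that it is natural in $(X,\mu)$; both are direct from the definitions, the naturality square sending $[e,x]$ to $[e,f(x)]$ and then to $f(x)=f(\Theta([e,x]))$.

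For the unit direction, given a torsion-free functor $F$, I would define $\varepsilon_F\colon F\to \Phi'(\Psi(F))$ componentwise by $\varepsilon_F(e)\colon F(e)\to\Phi'(\Psi(F))(e)$, $x\mapsto[e,x]$. By \eqref{eq:connected} we have $\Phi'(\Psi(F))(e)=\{\alpha : e\in\pi_1(\alpha)\}=\{[e,x]:x\in F(e)\}$, so each $\varepsilon_F(e)$ is surjective, and it is injective precisely because $F$ is torsion-free. To see that $\varepsilon_F$ is a natural transformation of functors on $L(S)$, I would check commutativity with the translation maps along an arrow $(f,s)$: the map $\Phi'(\Psi(F))(f,s)$ sends $[{\mathbf{d}}(s),x]$ to $s\circ[{\mathbf{d}}(s),x]=[{\mathbf{r}}(s),F({\mathbf{r}}(s),s)(x)]=[f,F(f,s)(x)]$, using ${\mathbf{r}}(s)\le f$ together with the generating relation and functoriality of $F$ (the composite of $({\mathbf{r}}(s),s)$ and $(f,{\mathbf{r}}(s))$ being $(f,s)$); this matches $\varepsilon_F(f)(F(f,s)(x))$.

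I expect the main obstacle to be the injectivity of $\Theta$. Well-definedness, surjectivity, and the morphism and naturality checks are essentially bookkeeping once $e\cdot x=x$ is available, but injectivity genuinely fails for non-connected $S$-sets, as Example \ref{ex:ce} shows, so the argument must invoke the connecting sequences in an essential way; the care required is in verifying that every idempotent occurring in such a sequence lies in $\pi_1$ and fixes $x$, so that each comparability step really yields one of the generating relations defining $\Psi(\Phi'(X,\mu))$.
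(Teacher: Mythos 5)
Your proposal is correct and follows essentially the same route as the paper: both directions use the same maps ($[e,x]\mapsto x$ for the counit, with surjectivity from effectiveness and injectivity from connectedness via connecting sequences, and $x\mapsto[e,x]$ for the unit, with bijectivity from torsion-freeness), together with the same naturality checks. Your write-up merely makes explicit a few steps the paper labels as ``clear,'' such as why the components $x\mapsto[e,x]$ are bijections.
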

\begin{proof} 
Let $F\colon L(S)\to {\mathsf{Sets}}$ be a torsion-free functor and show that $F$ is naturally isomorphic to the functor
$\Phi\Psi(F)$. By construction, for $e\in E$ we have 
$$\Phi\Psi(F)(e)=\{[e,x]\colon e\cdot x \text{ is defined}\}.$$
Clearly, the maps $\tau_e\colon x\to [e,x]$, $x\in F(e)$, $e\in E$, are bijections. In addition, 
these maps commute with the translation maps because for any arrow $(f,s)$ in $L(S)$, we have
$$
[{\mathbf{d}}(s),x]\stackrel {\Phi\Psi(F)(f,s)}{\xrightarrow{\hspace*{1.2cm}}} [f, F(f,s)(x)].
$$
It follows that we have constructed a natural isomorphism $\tau\colon F\to \Phi\Psi(F)$.

In the reverse direction, let $(X,\mu)$ be a connected non-strict $S$-set, where $(s,x)\mapsto s\cdot x$, where defined. The elements of the set $\Psi\Phi(X,\mu)$ are equivalence classes $[e,x]$ where $e\in E$ and $e\cdot x$ is defined. We define the map 
\begin{equation}\label{eq:beta}
\beta_{\mu}\colon \Psi\Phi(X,\mu)\to X
\end{equation}
 by
$[e,x]\mapsto x$. Let $x\in X$. Since $\mu$ is effective, there exists an $s\in S$ such that $s\cdot x$ is defined. But then ${\mathbf d}(s)\cdot x$ is defined, as well, which implies that the map $\beta_{\mu}$ is surjective. To show injectivity of $\beta_{\mu}$, we note that the condition $[e,x] \neq [f,x]$ is equivalent to the claim that there is no connecting sequence between $e$ and $f$, which does not happen as $\mu$ is connected. We obtain that the non-strict $S$-set $\Psi\Phi(X,\mu)$, $[e,x]\mapsto s\circ [e,x]$, where defined, is equivalent to $(X,\mu)$. Indeed, $s\circ [e,x]$ is defined if and only if $s\cdot x$ is defined, and in the case where $s\cdot x$ is defined we have the equality $$s\circ [e,x]=[{\mathbf{r}}(s), s\cdot x].$$ Moreover, this equivalence is natural in $(X,\mu)$.
\end{proof}

 \begin{corollary}[\cite{FH}] \label{cor:strict} The category of  $S$-sets is equivalent to the category of pull-back preserving functors on $L(S)$.
 \end{corollary}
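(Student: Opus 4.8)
The plan is to deduce the corollary from Theorem~\ref{th:equiv} by restricting the equivalence $(\Phi',\Psi)$ to the subcategory of (strict) $S$-sets. First I would record that every $S$-set is a connected non-strict $S$-set: this is exactly the content of the first example following Lemma~\ref{lem:lem3} (the sequence $e,ef,f$ is a connecting sequence over any admissible $x$), and a map between two $S$-sets is a morphism of non-strict $S$-sets precisely when it is a morphism of $S$-sets. Thus $S$-sets form a full subcategory of ${\mathsf{ConRepr}}(S)$, and it suffices to identify the image of this subcategory under $\Phi'$ inside ${\mathsf{TF}}(L(S))$ and to show that, within ${\mathsf{TF}}(L(S))$, it consists exactly of the pullback preserving functors.

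The heart of the argument is the following equivalence, which I would isolate as a lemma: a connected non-strict $S$-set $(X,\mu)$ is an $S$-set (i.e.\ $\mu$ is a homomorphism) if and only if $\Phi(X,\mu)$ preserves pullbacks. To analyze pullbacks in $L(S)$ I would first note that every arrow factors as an isomorphism followed by an order inclusion, namely $(f,s)=(f,{\mathbf{r}}(s))\circ({\mathbf{r}}(s),s)$, where $({\mathbf{r}}(s),s)\colon {\mathbf{d}}(s)\to{\mathbf{r}}(s)$ is an isomorphism with inverse $({\mathbf{d}}(s),s^{-1})$ and $(f,{\mathbf{r}}(s))\colon{\mathbf{r}}(s)\to f$ is the inclusion attached to ${\mathbf{r}}(s)\leq f$. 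Since pulling back along an isomorphism is harmless, the pullback of a cospan ${\mathbf{d}}(s)\xrightarrow{(g,s)} g\xleftarrow{(g,t)}{\mathbf{d}}(t)$ always exists and is carried by the meet ${\mathbf{r}}(s){\mathbf{r}}(t)$ of the range idempotents, with projections $(\,{\mathbf{d}}(s),s^{-1}{\mathbf{r}}(t)\,)$ and $(\,{\mathbf{d}}(t),t^{-1}{\mathbf{r}}(s)\,)$. Feeding this into $\Phi(X,\mu)$, and using that for any prehomomorphism one has $\mathrm{dom}\,\mu(s)=\mathrm{dom}\,\mu({\mathbf{d}}(s))$ and $\mathrm{im}\,\mu(s)=\mathrm{dom}\,\mu({\mathbf{r}}(s))$, a direct computation shows that $\Phi(X,\mu)$ preserves this pullback exactly when $\mathrm{dom}\,\mu({\mathbf{r}}(s){\mathbf{r}}(t))=\mathrm{dom}\,\mu({\mathbf{r}}(s))\cap\mathrm{dom}\,\mu({\mathbf{r}}(t))$. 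As $s,t$ range over $S$ the idempotents ${\mathbf{r}}(s),{\mathbf{r}}(t)$ range over all of $E(S)$, so $\Phi(X,\mu)$ preserves all pullbacks if and only if $\mu(ef)=\mu(e)\mu(f)$ for all $e,f\in E(S)$.

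The remaining, and main, obstacle is the purely semigroup-theoretic step that a prehomomorphism preserving products of idempotents is automatically a homomorphism. Here the key is to exploit images and not merely domains: because a prehomomorphism preserves inverses, each $\mu(s)$ is a bijection of $\mathrm{dom}\,\mu({\mathbf{d}}(s))$ \emph{onto} $\mathrm{dom}\,\mu({\mathbf{r}}(s))$. For arbitrary $s,t$ one always has $\mu(st)\leq\mu(s)\mu(t)$, so $\mu(st)$ is a restriction of the injective map $\mu(s)\mu(t)$; I would then show that the two maps have the same image. Indeed $\mathrm{im}\big(\mu(s)\mu(t)\big)=\mu(s)\big(\mathrm{dom}\,\mu({\mathbf{d}}(s))\cap\mathrm{dom}\,\mu({\mathbf{r}}(t))\big)=\mu(s)\big(\mathrm{dom}\,\mu({\mathbf{d}}(s){\mathbf{r}}(t))\big)$ by idempotent-product preservation, while the element $u=s{\mathbf{r}}(t)$ satisfies ${\mathbf{d}}(u)={\mathbf{d}}(s){\mathbf{r}}(t)$ and ${\mathbf{r}}(u)={\mathbf{r}}(st)$, and $\mu(u)=\mu(s)\mu({\mathbf{r}}(t))$ (a restriction with the same domain), whence $\mathrm{im}\big(\mu(s)\mu(t)\big)=\mathrm{im}\,\mu(u)=\mathrm{dom}\,\mu({\mathbf{r}}(st))=\mathrm{im}\,\mu(st)$. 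A restriction of an injection with the same image has the same domain, so $\mu(st)=\mu(s)\mu(t)$. Combining the two directions, $\Phi'$ restricts to a functor from $S$-sets to pullback preserving functors, and conversely $\Psi$ sends a pullback preserving $F\in{\mathsf{TF}}(L(S))$ to a strict $S$-set, since $\Phi\Psi(F)\cong F$ is pullback preserving and hence $\Psi(F)$ satisfies the idempotent-product condition. As these are the restrictions of the mutually inverse equivalences of Theorem~\ref{th:equiv}, they yield the desired equivalence of categories; I would finally remark that the torsion-free hypothesis is the ambient setting of Theorem~\ref{th:equiv}, so that the pullback preserving functors in question are understood inside ${\mathsf{TF}}(L(S))$.
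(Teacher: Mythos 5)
Your proposal is correct and follows essentially the same route as the paper: the paper's proof is a two-line deduction from Theorem~\ref{th:equiv} citing exactly the two facts you use, namely that every $S$-set is connected and that strictness of a non-strict $S$-set is equivalent to $\mu(ef)=\mu(e)\mu(f)$ for idempotents (with the link to pullback preservation left implicit). The only difference is that you supply full proofs of the facts the paper merely asserts --- the description of pullbacks in $L(S)$ via meets of range idempotents, and the semigroup-theoretic lemma that a prehomomorphism into ${\mathcal I}(X)$ preserving products of idempotents is automatically a homomorphism --- and both of these arguments check out.
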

 
\begin{proof} The statement follows from Theorem \ref{th:equiv} using the facts that any $S$-set is connected, and that a non-strict $S$-set $(X,\mu)$ is strict if and only if $\mu(ef)=\mu(e)\mu(f)$ for any $e,f\in E(S)$.
\end{proof}

We now establish a relationship between all non-strict $S$-sets and those of them which are connected. Recall that a subcategory ${\mathcal A}$ of a category ${\mathcal B}$ is called {\em coreflective} if the inclusion functor ${\mathrm{i}}\colon {\mathcal A}\to {\mathcal B}$ has a right adjoint. This adjoint is called a {\em coreflector}.

\begin{proposition}\label{prop:ce} The category ${\mathsf{ConRepr}}(S)$ is a coreflective subcategory of the category ${\mathsf{Repr}}(S)$. The coreflector is given by the functor $\Psi\Phi$.
\end{proposition}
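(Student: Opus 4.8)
The plan is to establish directly the adjunction $\mathrm{i}\dashv \Psi\Phi$, where $\mathrm{i}\colon {\mathsf{ConRepr}}(S)\to {\mathsf{Repr}}(S)$ is the inclusion, by exhibiting its counit together with the required universal property. For a non-strict $S$-set $(X,\mu)$ I take as the component of the counit at $(X,\mu)$ the map $\beta_\mu\colon \Psi\Phi(X,\mu)\to X$, $[e,x]\mapsto x$, from \eqref{eq:beta}. This map is well defined and surjective (shown in the proof of Theorem \ref{th:equiv}), and $\Psi\Phi(X,\mu)$ is connected by Lemma \ref{lem:lem3}(3). First I would check that $\beta_\mu$ is a morphism of non-strict $S$-sets: using that the action on $\Psi\Phi(X,\mu)$ satisfies $s\circ[{\mathbf d}(s),x]=[{\mathbf r}(s),s\cdot x]$, one computes $\beta_\mu(s\circ[e,x])=s\cdot \beta_\mu([e,x])$. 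A routine check on representatives shows that $\beta$ is natural in $(X,\mu)$, i.e. $\phi\circ\beta_\mu=\beta_{\mu'}\circ\Psi\Phi(\phi)$ for every morphism $\phi\colon (X,\mu)\to(X',\mu')$.

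It then remains to verify the universal property of the counit: for every connected non-strict $S$-set $(A,\nu)$ and every morphism $g\colon A\to X$ in ${\mathsf{Repr}}(S)$, there is a unique morphism $\bar g\colon A\to \Psi\Phi(X,\mu)$ in ${\mathsf{ConRepr}}(S)$ with $\beta_\mu\circ\bar g=g$. For existence I would use naturality of $\beta$ at $g$, namely $\beta_\mu\circ\Psi\Phi(g)=g\circ\beta_\nu$, together with the fact that, since $A$ is connected, $\beta_\nu\colon \Psi\Phi(A,\nu)\to A$ is an isomorphism in ${\mathsf{ConRepr}}(S)$ (Theorem \ref{th:equiv}). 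Setting $\bar g=\Psi\Phi(g)\circ\beta_\nu^{-1}$ then gives a morphism in ${\mathsf{ConRepr}}(S)$ with $\beta_\mu\circ\bar g=g\circ\beta_\nu\circ\beta_\nu^{-1}=g$.

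The main point is uniqueness, and I expect it to rest on one structural observation: in any non-strict $S$-set an idempotent acts as a partial identity, so that $e\cdot a=a$ whenever $e\cdot a$ is defined. (This is because the prehomomorphism $\theta\colon S\to {\mathcal I}(X)$ underlying $(X,\mu)$ sends an idempotent $e$ to an element $\theta(e)$ with $\theta(e)\leq\theta(e)^2$ in ${\mathcal I}(X)$, and since $\mathrm{dom}(\theta(e)^2)\subseteq\mathrm{dom}(\theta(e))$ this forces $\theta(e)$ to be a partial identity.) Granting this, let $h\colon A\to\Psi\Phi(X,\mu)$ be any morphism in ${\mathsf{ConRepr}}(S)$ with $\beta_\mu\circ h=g$, and fix $a\in A$. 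By effectiveness choose $e\in E(S)$ with $e\cdot a$ defined; then $e\cdot a=a$. Since $h$ is a morphism, $e\circ h(a)$ is defined and equals $h(e\cdot a)=h(a)$, so $e\in\pi_1(h(a))$ by \eqref{eq:connected}, whence $h(a)=[e,x]$ for some $x$; applying $\beta_\mu$ gives $x=g(a)$, so $h(a)=[e,g(a)]$. As $\bar g$ is itself such a lift, the same computation yields $\bar g(a)=[e,g(a)]=h(a)$, whence $h=\bar g$. This establishes $\mathrm{i}\dashv\Psi\Phi$ and identifies $\Psi\Phi$ as the coreflector. I anticipate that the identity $e\cdot a=a$, and its use in pinning down the representative of $h(a)$, will be the only genuinely non-formal step; the remaining verifications are the routine computations on representatives indicated above.
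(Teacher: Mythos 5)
Your proof is correct and is organized around the same counit $\beta_\mu\colon \Psi\Phi(X,\mu)\to X$, $[e,x]\mapsto x$, of \eqref{eq:beta} that the paper uses, but the two halves of the universal property are handled differently. For existence of the factorization, the paper defines $f$ directly by $f(x)=[e,g(x)]$ and checks independence of the choice of $e$ by an induction along a connecting sequence in the connected source; you obtain the same map as $\Psi\Phi(g)\circ\beta_\nu^{-1}$, using naturality of $\beta$ together with the fact (extracted from the proof of Theorem \ref{th:equiv}) that $\beta_\nu$ is an isomorphism on connected objects. The two constructions agree, and yours simply delegates the combinatorial well-definedness check to results already proved. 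The more substantive difference is uniqueness: the paper's proof exhibits a factorization $g=\beta_\nu f$ but does not argue that it is the only one, whereas you supply this missing half of the adjunction. Your key structural observation --- that in any non-strict $S$-set an idempotent acts as a partial identity, so $e\cdot a=a$ whenever defined --- is correct (it also follows at once from $e\cdot x=e\cdot(e\cdot x)$ and injectivity of $\theta(e)$, and the paper already invokes it when proving that $\Phi(X,\mu)$ is torsion-free), and it does pin down $h(a)=[e,g(a)]$ for any lift $h$ and any fixed $e$ with $e\cdot a$ defined, which forces $h=\bar g$. On the whole your write-up is, if anything, more complete than the paper's.
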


\begin{proof} Let $(X,\mu)$ be a non-strict $S$-set.  Just as in the proof of Theorem \ref{th:equiv}, we have the map $\beta_{\mu}\colon  \Psi\Phi(X,\mu)\to X$ given by \eqref{eq:beta}.
 This map is surjective,  and is injective if and only if $\mu$ is connected. 
We show that the functor $\Psi\Phi$ is a right adjoint to the functor ${\mathrm{i}}\colon {\mathsf{ConRepr}}(S) \to {\mathsf{Repr}}(S)$ where the maps $\beta_{\mu}$ are the components of the counit $\beta\colon i\circ \Psi\Phi \to {\mathrm{id}}_{{\mathsf{Repr}}(S)}$.

Let  $(X,\mu)$ be any connected non-strict $S$-set, $(Y,\nu)$ be any non-strict $S$-set, and $$g\colon (X,\mu) \to (Y,\nu)$$ a morphism. To define the morphism $$f\colon (X,\mu) \to \Psi\Phi(Y,\nu),$$ let $x\in X$ and $e\in E(S)$ be such that $\mu(e)(x)$ is defined. Then it follows that $\nu(e)(f(x))$ is defined, as well. We set 
\begin{equation}\label{eq:def_f}
f(x)=[e,g(x)]\in \Psi\Phi(Y,\nu).
\end{equation}
For brevity, in this proof, we write $s\cdot x$ for $\mu(s)(x)$, $s\circ x$ for $\nu(s)(x)$ and $s*x$ for $\Psi\Phi(\nu)(s)(x)$.
Note that if $h\cdot x$ is defined where $h\in E(S)$ then $h\circ f(x)$ is defined, and an induction shows that $(e,g(x))\sim (h,g(x))$ follows from $(e,x)\sim (h,x)$, where the latter equivalence holds because $\mu$ is connected.
Therefore, the map $f$ is well-defined.
Let us show that $f$ is a morphism of non-strict $S$-sets. Assume that $s\cdot x$ is defined. This is equivalent to that ${\mathbf{d}}(s)\cdot x$ is defined. It follows that $s*[{\mathbf{d}}(s),g(x)]$ is defined as well, and applying \eqref{eq:action} we have
$$
s*[{\mathbf{d}}(s),g(x)]=[{\mathbf{r}}(s),s\circ g(x)]=[{\mathbf{r}}(s),g(s\cdot x)].
$$
On the other hand,  $f(s\cdot x)=[{\mathbf{r}}(s),g(s\cdot x)]$ holds by \eqref{eq:def_f}.  All that remains is to note that  the equality $g=\beta_{\nu} f$ is a direct consequence of the definitions of $f$ and $\beta_{\nu}$. 
\end{proof}

 \subsection{Transitive representations of $S$ as directed functors on $L(S)$} \label{sub:3.2}

\begin{proposition}\label{prop:trans}
The equivalence in Corollary \ref{cor:strict} restricts to an equivalence between the category of transitive $S$-sets and directed pullback preserving functors on $L(S)$.
\end{proposition}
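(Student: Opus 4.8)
The plan is to reduce everything to Corollary~\ref{cor:strict}. That corollary already gives an equivalence $\Phi\dashv\Psi$ between $S$-sets and pullback preserving functors on $L(S)$, and both transitive $S$-sets and directed functors sit as \emph{full} subcategories of these. Hence it suffices to check that $\Phi$ carries transitive $S$-sets to directed functors and that $\Psi$ carries directed (pullback preserving) functors to transitive $S$-sets; the restricted functors are then automatically mutually quasi-inverse. So the whole argument collapses to one pointwise equivalence: a strict $S$-set $(X,\mu)$ is transitive if and only if $F=\Phi(X,\mu)$ is directed. First I would unwind the category of elements $\int_{L(S)}F$ in terms of the action. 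Since $F(e)=\{x\in X:e\cdot x\text{ is defined}\}$, an object of $\int_{L(S)}F$ is a pair $(e,x)$ with $e\cdot x$ defined, and a morphism $(e,x)\to(f,y)$ is an arrow $(f,s)$ of $L(S)$ on which $F$ acts by $x\mapsto s\cdot x$, i.e.\ an element $s\in S$ with $\mathbf{d}(s)=e$, $\mathbf{r}(s)\le f$ and $s\cdot x=y$. With this dictionary, axiom (F1) of Subsection~\ref{subs:2.5} says exactly that $X$ is nonempty (using effectiveness, so that some $e\cdot x$ is defined), and I would simply record that transitive $S$-sets are read as nonempty, the vacuous empty case being excluded on both sides.

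For the direction transitive $\Rightarrow$ directed I must verify (F2), which for two objects $i,j$ demands a span $i\leftarrow k\to j$. Given objects $(e,x)$ and $(f,y)$, transitivity supplies $r\in S$ with $r\cdot x=y$. The natural choice of apex is $z:=x$ with $t:=fre$ and $g:=\mathbf{d}(t)=e\,r^{-1}f r\,e$. A short computation using strictness (that $\mu$ is a genuine homomorphism, so products of idempotents act as intersections of domains) shows that $g\cdot x=x$ is defined, that $g\le e$, and that $t\cdot x=y$. Consequently $(e,g)\colon(g,x)\to(e,x)$ and $(f,t)\colon(g,x)\to(f,y)$ are morphisms of $\int_{L(S)}F$, giving the required span $(e,x)\xleftarrow{(e,g)}(g,x)\xrightarrow{(f,t)}(f,y)$.

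The hard part, and the main obstacle I expect, is precisely the bookkeeping with domain and range idempotents in $L(S)$: an arrow out of an object $g$ must have domain idempotent \emph{exactly} $g$, so one cannot take the apex to be $(e,x)$ itself but must descend to $g=\mathbf{d}(fre)\le e$, and the verification that $g\cdot x$ is still defined is exactly where pullback preservation is used. I would make this clean by noting $r^{-1}fr\cdot x=x$ (since $r\cdot x=y$, $f\cdot y=y$ and $r^{-1}\cdot y=x$), so that $x$ lies in the domains of both $e$ and $r^{-1}fr$ and hence of $g$.

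For the converse, directed $\Rightarrow$ transitive, given $x,y\in X$ I would use effectiveness to choose $e,f$ with $e\cdot x$ and $f\cdot y$ defined, yielding objects $(e,x)$ and $(f,y)$. Directedness provides a span, i.e.\ $s,t\in S$ and an element $z$ with $\mathbf{d}(s)=\mathbf{d}(t)=g$, $g\cdot z$ defined, $s\cdot z=x$ and $t\cdot z=y$. Strictness then gives $s^{-1}\cdot x=(s^{-1}s)\cdot z=g\cdot z=z$, whence $(ts^{-1})\cdot x=t\cdot z=y$, so $ts^{-1}$ witnesses transitivity. Combining the two implications with (F1)$\Leftrightarrow$nonemptiness, the already-established equivalence of Corollary~\ref{cor:strict}, and fullness of both subcategories then yields the claimed restricted equivalence.
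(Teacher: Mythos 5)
Your proposal is correct and follows essentially the same route as the paper: for (F2) you take the apex $(\mathbf{d}(fre),x)$ with the arrows $(e,\mathbf{d}(fre))$ and $(f,fre)$, and for the converse you unwind a span in $\int_{L(S)}\Phi(X,\mu)$ to produce the element $ts^{-1}$ (the paper's $st^{-1}$, up to relabelling) witnessing transitivity. The only cosmetic difference is that you make explicit the reduction to checking the two functors preserve the subcategories and the (F1)/nonemptiness bookkeeping, which the paper leaves implicit.
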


\begin{proof} Let $(X,\mu)$, $(s,x)\mapsto s\cdot x$, if defined, be a transitive $S$-set. We show that the functor $\Phi(X,\mu)$ is directed. Let $(e,x)$ and $(f,y)$ be objects of the category of elements $\int_{L(S)}\Phi(X,\mu)$ of $\Phi(X,\mu)$ and $s\in S$ be such that  $s\cdot x=y$. We put $t=fse$ and observe that $t\cdot x=y$ and also ${\mathbf{d}}(t)\leq e$, ${\mathbf{r}}(t)\leq f$. Observe that $({\mathbf{d}}(t),x)$ is an object of the category $\int_{L(S)}\Phi(X,\mu)$. Since $e\cdot x=x$, the arrow $(e,{\mathbf{d}}(t))$ of $L(S)$ is an arrow from 
$({\mathbf{d}}(t),x)$ to $(e,x)$ of the category $\int_{L(S)}\Phi(X,\mu)$. Since $t\cdot x=y$,  the arrow $(f,t)$ is an arrow from $({\mathbf{d}}(t),x)$ to $(f,y)$ of the category $\int_{L(S)}\Phi(X,\mu)$. It follows that the functor $\Phi(X,\mu)$ is directed.

In the reverse direction, assume that the functor $\Phi(X,\mu)$ is directed and let $x,y\in X$. Let $e,f\in E(S)$ be such that $e\cdot x$ and $f\cdot y$ are defined (such $e$ and $f$ exist since $X$ is effective: for some $s$ we have that $s\cdot x$ is defined, but then ${\mathbf d}(s)\cdot x$ is defined, as well). Since the category $\int_{L(S)}\Phi(X,\mu)$ is directed, there are $z\in X$ and $g\in E(S)$ such that $g\cdot z$ is defined, and in the category $\int_{L(S)}\Phi(X,\mu)$ there are arrows $(e,x)\leftarrow (g,z)\to (f,y)$. The arrow $(g,z)\to (f,y)$ is by definition an arrow $(f,s)$ in $L(S)$ from $g$ to $f$ such that $s\cdot z=y$. Likewise, the arrow $(g,z)\to (e,x)$ is an arrow $(e,t)$ in $L(S)$ from $g$ to $e$ such that $t\cdot z=x$. It follows that $st^{-1}\cdot x=y$ which implies that the action is transitive.
\end{proof}

We now briefly recall the classical result due to Boris Schein \cite{Sch} (see also  \cite{H,LMS}) of the structure of transitive $S$-sets\footnote{We recall our convention that all $S$-sets are effective.}. 

An inverse subemigroup $H$ of $S$ is called {\em closed} if it is upward closed as a subset of $S$, i.e. $H^{\uparrow}=H$. Let $H$ be a closed inverse subsemigroup of $S$. A {\em coset} with respect to $H$ is a set $(xH)^{\uparrow}$ where ${\mathbf{d}}(x)\in H$. Let $X_H$ be the set of cosets with respect to $H$. Define the structure of an $S$-set on $X_H$ 
by putting $s\cdot (xH)^{\uparrow}$ is defined if and only if $(sxH)^{\uparrow}$ is a coset in which case
\begin{equation}\label{eq:structure} s\cdot (xH)^{\uparrow}=(sxH)^{\uparrow}.
\end{equation}
The obtained $S$-set $X_H$ is transitive and any transitive $S$-set is equivalent to one so constructed. 

\begin{remark} {\em It follows that Proposition \ref{prop:trans} provides a link, which was not previously explicitly mentioned in the literature, between closed inverse subsemigroups of $S$ and directed and pullback preserving functors on $L(S)$.}
\end{remark}

\subsection{Universal representations and filtered functors on $L(S)$}\label{sub:3.3}

Let $H$ be a closed inverse subsemigroup of $S$. Recall that a {\em filter} in a semilattice is an upward closed subset $F$ such that $a\wedge b\in F$ whenever $a,b\in F$. Since the meet in $E(S)$ coincides with the product of idempotents, it follows that $E(H)$ is a filter in $E(S)$. Since $H$ is closed,  $H\supseteq E(H)^{\uparrow}$ always holds. On the other hand, for any filter $F$ in $E(H)$ we have that $F^{\uparrow}$ is a closed inverse subsemigroup of $S$.

An $S$-set $(X,\mu)$ is called {\em universal} \cite{LMS}, if it is equivalent to a representation of $S$ on cosets with respect to a closed inverse subsemigroup $F^{\uparrow}$, where $F$ is a filter in $E(S)$. The following result  is mentioned without proof in \cite{LMS}. We provide a proof for completeness.

\begin{proposition}\label{prop:torsors} An $S$-set  $(X,\mu)$ is an $S$-torsor if and only if it is universal. 
\end{proposition}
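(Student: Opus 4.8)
The plan is to prove both implications by carefully unpacking the definitions of \emph{torsor} (transitive and free) and \emph{universal} (equivalent to a coset representation on some $F^\uparrow$ with $F$ a filter in $E(S)$). Since Proposition \ref{prop:trans} and Schein's theorem tell us that every transitive $S$-set is equivalent to some coset representation $X_H$ for a closed inverse subsemigroup $H$, the crux of the matter will be to identify, \emph{among transitive $S$-sets}, exactly which ones are free, and to show this freeness condition is equivalent to $H$ having the form $F^\uparrow$ for a filter $F$ in $E(S)$ — equivalently, that $E(H)$ is a filter. So the first move is to reduce to the case $X = X_H$ and translate both ``free'' and ``universal'' into conditions on $H$.

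For the direction \emph{universal} $\Rightarrow$ \emph{torsor}, I would assume $X = X_{F^\uparrow}$ with $F$ a filter in $E(S)$; transitivity is automatic from Schein's construction, so only freeness needs checking. Using the action \eqref{eq:structure}, the equation $\mu(s)(xH)^\uparrow = \mu(t)(xH)^\uparrow$ on a coset means $(sxH)^\uparrow = (txH)^\uparrow$, and I would manipulate this coset equality to produce an element $c \le s, t$ with $\mu(c)(xH)^\uparrow = \mu(s)(xH)^\uparrow$. The key point where the filter hypothesis enters is that when $E(H)$ is a filter, two idempotents in $H$ have their meet (product) again in $H$, which lets one combine the witnesses for $s$ and $t$ into a single lower bound $c$; this is precisely the downward-directedness that freeness demands. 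For the converse, \emph{torsor} $\Rightarrow$ \emph{universal}, I would write $X \simeq X_H$ by Proposition \ref{prop:trans}/Schein, and then show that the freeness of the action forces $E(H)$ to be closed under meets, i.e. to be a filter: given $e, f \in E(H)$, freeness applied to a suitable identity of the form $\mu(e)(H)^\uparrow = \mu(f)(H)^\uparrow = (H)^\uparrow$ should yield a common lower bound $c \le e, f$ lying in $H$, and since $H$ is a closed inverse subsemigroup one deduces $ef \in H$, so $E(H) = F$ is a filter and $H = F^\uparrow$.

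The main obstacle I anticipate is the \emph{torsor} $\Rightarrow$ \emph{universal} direction, specifically the bookkeeping that translates the abstract freeness condition (which is stated for an arbitrary point $x$ and arbitrary $s, t$) into the concrete statement that $E(H)$ is meet-closed. The subtlety is that freeness is quantified over \emph{all} $x \in X$, whereas to conclude something about $H$ one wants to test it at the distinguished coset $(H)^\uparrow$ (the image of the identity) and with $s, t$ chosen to be the idempotents $e, f \in E(H)$ themselves; I must check that this specialization is legitimate and that the resulting lower bound $c$ can be taken inside $H$ rather than merely below $e$ and $f$. Care is also needed because the natural partial order and the closure operation $(-)^\uparrow$ interact, so I would keep close track of when $\mathbf{d}(x) \in H$ is required for $(xH)^\uparrow$ to be a genuine coset.

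A cleaner alternative, which I would pursue if the direct coset computation becomes unwieldy, is to route the argument through the functor correspondence: by Proposition \ref{prop:trans} transitive pullback-preserving functors correspond to directed ones, and by Theorem \ref{th:filt} (together with the remark on Diaconescu's theorem) filtered functors are the flat ones; so it would suffice to show that the torsor condition on $X_H$ is equivalent to the category of elements $\int_{L(S)} \Phi(X_H, \mu)$ satisfying the filteredness axiom (F3) on top of directedness (F1), (F2). In this framing, axiom (F3) — coequalizing parallel arrows — is exactly the semigroup-theoretic shadow of freeness, and the filter condition on $E(H)$ is what guarantees it. I would present the proof in whichever of these two languages keeps the verification of the freeness $\leftrightarrow$ filter equivalence most transparent, most likely the direct coset computation for concreteness, invoking the functorial picture only as a sanity check.
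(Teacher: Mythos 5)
Your reduction to $X\simeq X_H$ via Schein's theorem matches the paper, but there is a genuine gap in the direction \emph{torsor} $\Rightarrow$ \emph{universal}, and it propagates into the other direction as well. You propose to show that freeness forces $E(H)$ to be closed under meets, ``i.e.\ to be a filter,'' and then to conclude $H=F^{\uparrow}$. However, $E(H)$ is \emph{automatically} a filter for \emph{every} closed inverse subsemigroup $H$: it is upward closed because $H$ is, and meet-closed because the meet of two idempotents is their product, which lies in the subsemigroup $H$ (the paper records exactly this at the start of Subsection 3.3). So your test --- applying freeness to pairs of idempotents $e,f\in E(H)$ --- only re-derives a fact valid for every transitive $S$-set and cannot distinguish torsors from non-torsors; in particular the final step ``$E(H)=F$ is a filter and $H=F^{\uparrow}$'' is a non sequitur. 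The actual content of universality is the inclusion $H\subseteq E(H)^{\uparrow}$, i.e.\ that every (possibly non-idempotent) element of the stabilizer lies \emph{above some idempotent belonging to} $H$, which is strictly stronger than meet-closedness of $E(H)$. A concrete witness: let $S=G$ be a nontrivial group acting on a one-point set, so $H=G$. Then $E(H)=\{1\}$ is a filter, yet $H\neq E(H)^{\uparrow}=\{1\}$ and the action is neither free nor universal; your argument, were it valid, would certify this action as universal.

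The repair is the paper's: for $s$ in the stabilizer $H$ of a chosen point $x$, apply freeness not to a pair of idempotents but to the pair $(s,\mathbf{d}(s))$, using $s\cdot x=\mathbf{d}(s)\cdot x=x$. Freeness yields $c\leq s,\mathbf{d}(s)$ with $c\cdot x=x$; since $c\leq\mathbf{d}(s)$ forces $c$ to be idempotent, we get $c\in E(H)$ and $s\geq c$, hence $H\subseteq E(H)^{\uparrow}$. Correspondingly, in the direction \emph{universal} $\Rightarrow$ \emph{torsor} the hypothesis you should exploit is $H=E(H)^{\uparrow}$ rather than meet-closedness of $E(H)$: from $s\cdot x=t\cdot x$ one finds that $t^{-1}s$ stabilizes $x$, hence $t^{-1}s\geq g$ for some idempotent $g$ with $g\cdot x=x$, and $c=tg\leq s,t$ is the required common lower bound with $c\cdot x=t\cdot x$. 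Your alternative route through axiom (F3) for the category of elements would work in principle (it is essentially the paper's Proposition 3.11), but it inherits the same issue: the parallel arrows to be coequalized come from arbitrary $s,t$ with $s\cdot x=t\cdot x$, not from idempotents, so the same correction is needed there.
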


\begin{proof} Let $(X,\mu)$, $(s,x)\mapsto s\cdot x$, where defined, be an $S$-set. Let $x\in X$ and put 
$$
H=\{s\in S\colon s\cdot x \text{ is defined and } s\cdot x=x\}.
$$
Then $H$ is a closed inverse subsemigroup of $S$, and $(X,\mu)$ is equivalent to the structure of an $S$-set, $(X_H,\nu)$, given in \eqref{eq:structure}, on the set $X_H$ of cosets with respect to $H$. We may thus assume that $(X,\mu)=(X_H,\nu)$. 

Assume that $(X_H,\nu)$ is an $S$-torsor. We show that $H=E(H)^{\uparrow}$. It is enough to verify that $H\subseteq E(H)^{\uparrow}$.
Let $s\in H$. Since $(X_H,\nu)$ is free, the equalities $$s\cdot x = {\mathbf d}(s)\cdot x=x$$
imply that there is some $c\leq s, {\mathbf d}(s)$ such that $c\cdot x=x$. Therefore $c\in E(H)$ and $s\geq c$, so that we have the inclusion $H\subseteq E(H)^{\uparrow}$.

Conversely, assume that $(X_H,\mu)$ is universal and let $s,t\in S$ and $x\in X_H$ be such that $s\cdot x=t\cdot x$.
Then there are some $e,f\in E(H)$ such that $s\geq e$, $t\geq f$ such that $e\cdot x$ and $f\cdot x$ are defined, and then of course $e\cdot x=f\cdot x=x$. We put $h=ef$. Then $s,t\geq h$ and $h\cdot x=x$, so that $(X_H,\mu)$ is an $S$-torsor.
\end{proof}

The following result follows from Proposition \ref{prop:torsors} and  \cite[Proposition 3.9]{FH} stated there without proof.  

\begin{proposition}\label{prop:ff} The equivalence in Proposition \ref{prop:trans} restricts to an equivalence between the category of universal $S$-sets and the category of filtered functors on $L(S)$. Consequently, the category of points of the topos ${\mathcal B}(S)$ is equivalent to the category of universal $S$-sets.
\end{proposition}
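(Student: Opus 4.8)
The plan is to prove Proposition~\ref{prop:ff} by combining two facts already established in the excerpt: first, Proposition~\ref{prop:trans}, which gives an equivalence between transitive $S$-sets and directed pullback-preserving functors on $L(S)$; and second, Proposition~\ref{prop:torsors}, which identifies $S$-torsors with universal $S$-sets. Since a filtered category is precisely a directed category satisfying the extra coequalizing axiom (F3), and since a pullback-preserving functor corresponds to a \emph{strict} $S$-set (by Corollary~\ref{cor:strict}), the natural strategy is to show that, within the scope of transitive pullback-preserving functors, the functor $\Phi(X,\mu)$ is filtered if and only if the $S$-set $(X,\mu)$ is an $S$-torsor. Invoking Proposition~\ref{prop:torsors} then converts ``$S$-torsor'' into ``universal'' and completes the restriction of the equivalence.

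Concretely, I would start from a transitive pullback-preserving functor $\Phi(X,\mu)$, which by Proposition~\ref{prop:trans} is already directed, so axioms (F1) and (F2) hold for $\int_{L(S)}\Phi(X,\mu)$. The task reduces to verifying that (F3)---the existence of a coequalizing cone for any parallel pair of arrows---is equivalent to freeness of the action. A parallel pair of arrows in the category of elements from $(e,x)$ to $(f,y)$ is given by two morphisms $(f,s),(f,t)$ of $L(S)$ with $\mathbf{d}(s)=\mathbf{d}(t)=e$, $\mathbf{r}(s),\mathbf{r}(t)\le f$, and $s\cdot x=y=t\cdot x$. I would show that the existence of an arrow $(g,u)\colon (g,z)\to(e,x)$ equalizing these two (i.e.\ making $su=tu$ in the relevant sense) is exactly the condition that there is some $c\le s,t$ with $c\cdot x = s\cdot x$, which is the definition of freeness. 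Thus (F3) holds for all such parallel pairs precisely when $(X,\mu)$ is free, and hence (being also transitive) an $S$-torsor.

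The main obstacle I anticipate is bookkeeping the precise form of arrows and the equalizing condition in $\int_{L(S)}\Phi(X,\mu)$: one must carefully translate axiom (F3) into a statement purely about the partial action, taking care that the candidate equalizing element $c$ lives at an idempotent $g\le e$ for which $g\cdot x$ is defined, and that the arrow $(e,g)$ together with the restricted $s,t$ genuinely forms a morphism of the category of elements. There is a subtlety in that (F3) quantifies over \emph{all} parallel pairs, whereas freeness is phrased for a single point $x$; I would need to observe that transitivity lets one reduce an arbitrary parallel pair to one based at a chosen point, so the two conditions match up without loss of generality.

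Once this equivalence (filtered $\Leftrightarrow$ $S$-torsor) is in place, I would conclude by chaining the identifications: filtered functors correspond to $S$-torsors, which by Proposition~\ref{prop:torsors} are exactly the universal $S$-sets, giving the first assertion. For the final sentence, I would apply Theorem~\ref{th:filt} with $\mathcal{C}=L(S)$, which identifies the category of filtered functors on $L(S)$ with the category of points $\mathsf{Geom}(\mathsf{Sets},\mathcal{B}(S))$ of the classifying topos; composing with the just-established equivalence yields that the points of $\mathcal{B}(S)$ are equivalent to the universal $S$-sets, as claimed.
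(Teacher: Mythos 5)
Your proposal follows essentially the same route as the paper: axioms (F1)--(F2) are delegated to Proposition~\ref{prop:trans} via transitivity, axiom (F3) is shown to be equivalent to freeness of the action by translating parallel pairs in $\int_{L(S)}\Phi(X,\mu)$ into the existence of a common lower bound $c\le s,t$ acting the same way, and Proposition~\ref{prop:torsors} together with Theorem~\ref{th:filt} finishes the argument. The only minor remark is that the subtlety you anticipate is resolved not by transitivity but by replacing an arbitrary pair $s,t$ with $s\cdot x=t\cdot x$ by $hse,hte$ (where $e={\mathbf d}(s){\mathbf d}(t)$, $h={\mathbf r}(s){\mathbf r}(t)$) so that they share domain and range idempotents and genuinely form a parallel pair, exactly as the paper does.
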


\begin{proof} Let $(X,\mu)$ be a universal $S$-set. Assume that we have two objects $(e,x)$ and $(f,y)$ and two arrows $$
(e,x) \drightarrow{(f,s)}{(f,t)} (f,y)
$$ in the category of elements $\int_{L(S)}\Phi(X,\mu)$. This implies that ${\mathbf d}(s)={\mathbf d}(t)=e$ and $s\cdot x=t\cdot x=y$. Since $(X,\mu)$ is free, there is $c\leq c,s$ such that $c\cdot x=y$.
Since $c\leq s$ we have that $c=sg$ for some $g\in E(S)$ where we may assume that $g\leq e$. Then
${\mathbf d}(c)=g$. This and $c\leq t$ yield that $c=tg$. It follows that there is an arrow
$$
(g,x)\stackrel{(e,g)}{\longrightarrow} (e,x)
$$
in the category $\int_{L(S)}\Phi(X,\mu)$. Since $(f,s)(e,g)=(f,c)=(f,t)(e,g)$ in $L(S)$, the diagram 
$$
(g,x) \stackrel{(e,g)}{\longrightarrow}(e,x) \drightarrow{(f,s)}{(f,t)} (f,y)
$$
is commutative. Therefore, the functor $\Phi(X,\mu)$ satisfies axiom (F3) from the definition of a filtered functor (see Subsection \ref{subs:2.5}). It satisfies (F1) and (F2) due to Proposition \ref{prop:trans}, since universal $S$-sets are transitive.

Conversely, let $(X,\mu)$ be an $S$-set such that the functor $\Phi(X,\mu)$ is filtered. Assume that $s,t\in S$ and $x\in X$ are such that $s\cdot x=t\cdot x$. Let $e={\mathbf d}(s){\mathbf d}(t)$ and $h={\mathbf r}(s){\mathbf r}(t)$. Then $hse\cdot x=hte\cdot x$ and also ${\mathbf d}(hse)={\mathbf d}(hte)$, ${\mathbf r}(hse)={\mathbf r}(hte)$. We put
$p={\mathbf d}(hse)$ and $q={\mathbf r}(hse)$.
It follows that in the category  $\int_{L(S)}\Phi(X,\mu)$ we have two parallel arrows
$$
(p,x)  \drightarrow{(q,hse)}{(q,hte)} (q,y).
$$
By axiom (F3), there is a commutative diagram
$$
(r,z) \stackrel{(p,a)}\longrightarrow (p,x)  \drightarrow{(q,hse)}{(q,hte)} (q,y)
$$
in the category  $\int_{L(S)}\Phi(X,\mu)$. This means that $hsea=htea$. Then $hse{\mathbf r}(a)=hte{\mathbf r}(a)$ and also 
$$hse{\mathbf r}(a)\cdot y =hte{\mathbf r}(a)\cdot y = z,$$
which proves that $(X,\mu)$ is free. Applying Proposition \ref{prop:trans} (noting that filtered functors preserve pullbacks) and Proposition \ref{prop:torsors}, we conclude that $(X,\mu)$ is an $S$-torsor.
\end{proof}

\section{Principal bundles over inverse semigroups}\label{sec:bundles}

In this section, we obtain an equivalence between the category of universal representations of an inverse semigroup on \'etale spaces over a topological space $X$ and the category of principal $L(S)$-bundles over $X$. This extends the well known result for groups \cite[VIII.1, VIII.2]{MM}, and also is an analogue of Proposition \ref{prop:ff}, if in the latter one replaces the topos of sets by the topos ${\mathsf{Sh}}(X)$.

The following definition is taken from \cite{M}. Let $X$ be a topological space and ${\mathcal C}$ a small category. A functor $E\colon {\mathcal C}\to {\mathsf{Sh}}(X)$ is called a ${\mathcal C}$-{\em bundle}. 
If $E\colon {\mathcal C}\to {\mathsf{Sh}}(X)$ is a ${\mathcal C}$-{\em bundle}, $\alpha\colon c\to d$ is an arrow in ${\mathcal C}$ and $y\in E(c)$,  we put
$$
\alpha\cdot y=E(\alpha)(y)\in E(d).
$$

A ${\mathcal C}$-bundle $E$ is called {\em principal}, if for each point $x\in X$ the following axioms are satisfied by the stalks $E(C)_x$:
\begin{enumerate}
\item[(PB1)] (non-empty) There is an object $c$ of $C$ such that $E(c)_x\neq\varnothing$;
\item[(PB2)] (transitive) For any $y\in E(c)_x$ and $z\in E(d)_x$, there are arrows $\alpha\colon b\to c$ and $\beta\colon b\to d$ for some object $b$ of $C$, and a point $w\in E(b)_x$, so that $\alpha\cdot w=y$ and $\beta\cdot w=z$.
\item[(PB3)] (free) For any two parallel arrows $\alpha,\beta\colon c\rightrightarrows d$ and any $y\in E(c)_x$, for which $\alpha\cdot y=\beta\cdot y$, there exists an arrow $\gamma\colon b\to c$ and a point $z\in E(b)_x$ so that $\alpha\gamma=\beta\gamma$ and $\gamma\cdot z=y$.
\end{enumerate}

Principal ${\mathcal C}$-bundles are known to coincide with filtered functors from ${\mathcal C}$ to the topos ${\mathsf{Sh}}(X)$. It is immediate that given a principal ${\mathcal C}$-bundle $E\colon {\mathcal C}\to {\mathsf{Sh}}(X)$ and $x\in X$, the induced restriction to stalk functor $E_x\colon {\mathcal C}\to {\mathsf{Sets}}$, $c\mapsto E(c)_x$, is a filtered functor. 

For two principal ${\mathcal C}$-bundles $E$ and $E'$, a {\em morphism} from $E$ to $E'$ is simply a natural transformation $\varphi\colon E\to E'$, that is, a collection of sheaf maps $\varphi_c\colon E(c)\to E'(c)$, where $c$ runs through objects of ${\mathcal C}$, such that for each arrow $\alpha\colon c\to d$ in ${\mathcal C}$ and each $y\in E(c)$ we have that $\varphi_d(\alpha\cdot y)=\alpha\cdot\varphi_c(y)$. 
We have therefore defined the category 
${\mathsf{Prin}}({\mathcal C}, X)$
of {\em principal bundles} over ${\mathcal C}$.

In the case where ${\mathcal C}$ is Loganathan's category $L(S)$ for an inverse semigroup $S$, we will call a principal bundle over $L(S)$ a  {\em principal bundle over} $S$ and the category ${\mathsf{Prin}}(L(S), X)$ the category of {\em principal bundles over} $S$. We will write ${\mathsf{Prin}}(S, X)$ for ${\mathsf{Prin}}(L(S), X)$.

We now define the notion of a {\em universal} $S$-{\em set} in the topos ${\mathsf{Sh}}(X)$.  Let $\pi \colon E\to X$ be an \'etale space and assume that a structure of an $S$-set $(E,\mu)$, $(s,x)\mapsto s\cdot x$, if defined, is given on $E$  such that the following conditions are met:
\begin{enumerate}
\item[(U1)] (effective on each stalk)  For any $x\in X$, there is at least one point $e\in E_x$ such that $s\cdot e$ is defined for some $s\in S$ (this in particular implies that all stalks are non-empty, that is, the map $\pi$ is surjective).
\item[(U2)] (domains are open) For any $s\in S$ the set $\{e\in E\colon s\cdot e \text{ is defined}\}$ is open.
\item[(U3)] (stalks are invariant)  For any $s\in S$ and $e\in E$, if $s\cdot e$ is defined then $\pi(s\cdot e)=\pi(e)$.
\item[(U4)] (universal on stalks) For any $s\in S$ and $x\in X$, $(E_x,\mu|_{S\times E_x})$ (which is well-defined by (U3)) is a universal $S$-set.
\item [(U5)] (continuous) The partially defined map $S\times E \to E$, $(s,x)\mapsto s\cdot x$, is continuous ($S$ is considered as a discrete space and $S\times E$ as a product space). 
\end{enumerate}

It is easy to see that (U4) implies (U1), so that (U1) may be omitted from the above list.

Let $\pi \colon E\to X$, $\pi'\colon E'\to X$ be \'etale spaces and $(E,\mu)$, $(s,e)\mapsto s\cdot x$, if defined,  $(E',\nu)$, $(s,e)\mapsto s\circ x$, if defined, be universal $S$-sets in the topos  ${\mathsf{Sh}}(X)$.  A morphism 
$$
f\colon (E,\mu) \to (E',\nu)
$$ 
is defined as a morphism $f\colon E\to E'$ of \'etale spaces (that is, a continuous map such that $\pi=\pi'f$, cf. \cite{MM})  which is simultaneously a morphism of $S$-sets (that is, if $s\cdot e$ is defined then $s\circ f(e)$ is defined and $f(s\cdot x)=s\circ f(x)$).
We denote the category of universal $S$-sets in the topos ${\mathsf{Sh}}(X)$ by ${\mathsf{Univ}}(S,X)$.

\begin{theorem}\label{th:sheaves} There is an equivalence of categories
$$
{\mathsf{Prin}}(S, X) \,\,
\drightleftarrow{\tau}{\rho} \,\, {\mathsf{Univ}}(S,X).
$$
\end{theorem}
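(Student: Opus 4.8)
The plan is to reduce the whole statement to the set-level equivalence of Proposition \ref{prop:ff} by working one stalk at a time, and to treat the topology as a separate layer on top. Recall that a principal bundle over $S$ is a functor $E\colon L(S)\to{\mathsf{Sh}}(X)$ whose stalk functors $E_x\colon L(S)\to{\mathsf{Sets}}$, $e\mapsto E(e)_x$, are all filtered, and that by Proposition \ref{prop:ff} each such $E_x$ corresponds to a universal $S$-set via the constructions $\Phi$ and $\Psi$ of Section \ref{sec:sets}. The functor $\tau$ will be a sheaf-theoretic version of $\Psi$, and $\rho$ a sheaf-theoretic version of $\Phi$.

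First I would construct $\tau$. Given a principal bundle $E$, I set $\tau(E)=\mathcal{E}$ to be the colimit in ${\mathsf{Sh}}(X)$ of the composite $E(S)\to L(S)\stackrel{E}{\to}{\mathsf{Sh}}(X)$, regarded as an \'etale space $\pi\colon\mathcal{E}\to X$, and equip it with the action
$$
s\circ[e,y]=[{\mathbf{r}}(s),E({\mathbf{r}}(s),s)(y)]\quad\text{if }e={\mathbf{d}}(s),
$$
exactly mirroring \eqref{eq:action}. Since the stalk functor ${\mathsf{Sh}}(X)\to{\mathsf{Sets}}$ at $x$ is the inverse image of a point of the topos, it preserves colimits, so $\mathcal{E}_x=\Psi(E_x)$ as a set and the induced stalk action is precisely the universal $S$-set action produced in Proposition \ref{prop:ff}; this gives (U4), and (U3) is automatic since the colimit respects the decomposition into stalks. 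Axiom (U5) follows from continuity of the sheaf translation maps $E({\mathbf{r}}(s),s)$, and (U2) from openness of the canonical maps $E(e)\to\mathcal{E}$ into the colimit.

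In the reverse direction I would define $\rho(E,\mu)=\Phi(E,\mu)$ by imitating the set construction $\Phi$: for an idempotent $e$ put $\Phi(E,\mu)(e)=\{y\in E\colon e\cdot y\text{ is defined}\}$, which is open by (U2) and hence \'etale over $X$, and for an arrow $(f,s)$ in $L(S)$ let the translation map be $y\mapsto s\cdot y$, which is continuous by (U5). Well-definedness and functoriality are verified exactly as in Section \ref{sec:sets}. That $\Phi(E,\mu)$ is principal is again a stalk-wise matter: its stalk functor at $x$ is $\Phi(E_x,\mu|_{S\times E_x})$, which is filtered by (U4) together with Proposition \ref{prop:ff}. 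On morphisms, $\tau$ and $\rho$ act through the stalk-wise assignments $\Psi(\alpha)$ and $\Phi(f)$ of Section \ref{sec:sets}, which are continuous because they are assembled from the given sheaf maps.

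It then remains to exhibit natural isomorphisms $\rho\tau\cong\mathrm{id}$ and $\tau\rho\cong\mathrm{id}$. Stalk-wise these are exactly the isomorphisms $\tau_e\colon x\mapsto[e,x]$ and $\beta_{\mu}\colon[e,x]\mapsto x$ of \eqref{eq:beta} constructed in the proof of Theorem \ref{th:equiv}, using here that each stalk is a universal, hence connected, $S$-set. I expect the main obstacle to be purely topological: confirming that these fibre-wise bijections glue to genuine isomorphisms of \'etale spaces rather than mere continuous fibre-wise bijections, and that the colimit defining $\tau$ really is \'etale with a continuous action and open structure maps. The key fact to invoke is that a continuous, fibre-preserving map between \'etale spaces over $X$ that is bijective on every stalk is automatically a homeomorphism, since a bijective local homeomorphism is a homeomorphism. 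With this in hand, the stalk-wise equivalence of Proposition \ref{prop:ff} upgrades to the claimed equivalence of categories.
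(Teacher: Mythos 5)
Your proposal matches the paper's proof in all essentials: the same colimit construction of the \'etale space $\widetilde{E}$ with the stalk-wise action mirroring \eqref{eq:action}, the same verification of (U2)--(U5) via openness of the inclusions $E(e)\hookrightarrow\widetilde{E}$ and continuity of the translation maps, the same $\Phi$-style inverse construction, and the same reduction of principality and universality to the stalk-level equivalence of Proposition \ref{prop:ff}. Your closing remarks on gluing the fibre-wise bijections of Theorem \ref{th:equiv} into isomorphisms of \'etale spaces are if anything slightly more explicit than the paper, which leaves the natural isomorphisms $\rho\tau\cong\mathrm{id}$ and $\tau\rho\cong\mathrm{id}$ implicit.
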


\begin{proof}

We begin with the construction of the functor 
$$\tau\colon  {\mathsf{Prin}}(S, X)\to {\mathsf{Univ}}(S,X).$$
 Let $E\colon L(S) \to {\mathsf{Sh}}(X)$ be a principal bundle over $X$ and $x\in X$. 
 We first describe the colimit sheaf $\widetilde{E}\in {\mathsf{Sh}}(X)$. By definition, for each $x\in X$
we have a filtered functor $f_x\colon L(S)\to {\mathsf{Sets}}$ obtained by restricting $E$ to the stalks over $x$.
We now apply the functor $\Psi$ to each $f_x$.  Proposition \ref{prop:ff} ensures us that each $\Psi(f_x)$ is a universal $S$-set. Note that each of the sets $\Psi(f_x)$ is non-empty by (PB1).
As the stalks of the colimit sheaf are colimits of stalks, we may set 
$$\widetilde{E}=\bigcup_{x\in X} \Psi(f_x)$$
to be a disjoint union of all the sets  $\Psi(f_x)$.
 The projection map 
 $$p\colon \widetilde{E}\to X$$ 
 given by $y\mapsto x$ if $y\in \Psi(f_x)$.
 
We may identify each space $E(e)$ with its image under the inclusion into $\widetilde{E}$. The colimit topology on  $\widetilde{E}$ is the finest topology which makes all inclusion maps $E(e)\hookrightarrow \widetilde{E}$ continuous. The base of this topology is formed by the sets $A\subseteq \widetilde{E}$ such that $A\subseteq E(e)$ for some $e$. 
The structure of  an $S$-set, $(s,y)\mapsto s*y$, where defined, on $\widetilde{E}$ is induced by the structures of $S$-sets on each on $\Psi(f_x)$, given by \eqref{eq:action}. 

We now prove that for each $s\in S$ the set 
$$
D_s=\{y\in \widetilde{E}\colon s*y \text{ is defined}\}
$$
is open. Clearly, $D_s=D_{{\mathbf d}(s)}$. 
Let $y\in D_{{\mathbf d}(s)}$ and $A$ be a neighbourhood of $y$ in $\widetilde{E}$. Since the inclusion map $i\colon E_{{\mathbf{d}}(s)}\hookrightarrow \widetilde{E}$ is open, we have that $ii^{-1}(A)$ is a neighbourhood of $y$ in $\widetilde{E}$ which is contained in $D_{{\mathbf d}(s)}$. This implies that the set $D_{{\mathbf d}(s)}$ is open.
Using the fact that the translation maps are continuous, it is routine to verify that the partially defined map $S\times E \to E$, $(s,x)\mapsto s\cdot x$, is continuous.

We now define $\tau$ on morphisms. Let $E$ and $E'$ be principal $S$-bundles and $\varphi\colon E\to E'$ be a natural transformation. The family of continuous maps $E(e)\to E'(e)$ for each $e\in E(S)$ and the construction of $\widetilde{E}$ yield a continuous map $\tau(\varphi)\colon \widetilde{E}\to \widetilde{E'}$ which obviously satisfies the definition of a morphism of universal representations.

We now turn to the construction of the functor $\rho\colon  {\mathsf{Univ}}(S,X) \to {\mathsf{Prin}}(S, X)$.
Let $p\colon \widetilde{E}\to X$ be an \'etale space and $(\widetilde{E},\mu)$, $(e,x)\mapsto e*x$, if defined, a structure of an $S$-set on $\widetilde{E}$ which satisfies (U1) -- (U5).
We fix $e\in E(S)$ and let
$$
E(e)=\{y\in \widetilde{E}\colon e*y\text{ is defined}\}.
$$
Define $p_e\colon E(e)\to X$ to be the restriction of the map $p$ to $E(e)$. Clearly, $p_e$ is a local homeomorphism.
By Proposition \ref{prop:ff}, for each $x\in X$, the restriction of $*$ to $E_x$ gives rise to a filtered functor
$\Phi(\widetilde{E},\mu)_x\colon L(S)\to {\mathsf{Sets}}$ and it is routine to verify that these give rise to a filtered functor $\rho(p\colon \widetilde{E}\to X)\colon L(S)\to {\mathsf{Sh}}(X)$.

To define $\rho$ on morphisms, we observe that a morphism $\psi \colon E\to E'$ of universal representations yields a family of maps $E(e)\to E'(e)$ for each $e\in E(S)$. By construction, these maps are continuous and are components of a natural transformation from $\rho(E)$ to $\rho(E')$.
\end{proof}

It follows that $S$-torsors in the topos ${\mathsf{Sh}}(X)$ can be defined as universal $S$-bundles.

As a direct consequence of Theorem \ref{th:equiv} and an analogue of Theorem  \ref{th:filt} for the topos ${\mathsf{Sh}}(X)$, we obtain the following result.

\begin{corollary} The category of geometric morphisms ${\mathsf{Geom}}({\mathsf{Sh}}(X), {\mathcal{B}}(S))$ is equivalent to the category ${\mathsf{Univ}}(S,X)$ of universal $S$-bundles in the topos  ${\mathsf{Sh}}(X)$. 
\end{corollary}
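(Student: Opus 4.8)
The plan is to obtain the stated equivalence by composing two equivalences of categories that are already available: the topos-theoretic (Diaconescu) correspondence between geometric morphisms into the classifying topos ${\mathcal{B}}(S)$ and filtered functors, and the concrete identification of principal bundles with universal $S$-sets furnished by Theorem \ref{th:sheaves}.

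First I would invoke the analogue of Theorem \ref{th:filt} with the topos ${\mathsf{Sh}}(X)$ in place of ${\mathsf{Sets}}$, namely Diaconescu's theorem, as flagged in the remark following Theorem \ref{th:filt} (see \cite[VII.8, VIII.1]{MM}). Since ${\mathcal{B}}(S)={\mathcal B}(L(S))$ is the classifying topos of the small category $L(S)$, this yields an equivalence between the category ${\mathsf{Geom}}({\mathsf{Sh}}(X),{\mathcal{B}}(S))$ of geometric morphisms and the category of flat (equivalently, filtered) functors $L(S)\to{\mathsf{Sh}}(X)$, the functors in each direction being realized, exactly as in Theorem \ref{th:filt}, by tensoring with the given functor on one side and composing the inverse image functor with the Yoneda embedding on the other.

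Second I would identify this category of filtered functors with ${\mathsf{Prin}}(S,X)$. As recorded just before the statement of Theorem \ref{th:sheaves}, a functor $E\colon L(S)\to{\mathsf{Sh}}(X)$ is filtered precisely when it is a principal $L(S)$-bundle, that is, when the stalks $E(c)_x$ satisfy (PB1)--(PB3) at every point $x\in X$; and morphisms on both sides are just natural transformations. Hence the category of filtered functors $L(S)\to{\mathsf{Sh}}(X)$ coincides with ${\mathsf{Prin}}(S,X)$. Applying Theorem \ref{th:sheaves}, which supplies the equivalence ${\mathsf{Prin}}(S,X)\simeq{\mathsf{Univ}}(S,X)$ via $\tau$ and $\rho$, and composing the three identifications, I obtain
$$
{\mathsf{Geom}}({\mathsf{Sh}}(X),{\mathcal{B}}(S))\;\simeq\;{\mathsf{Prin}}(S,X)\;\simeq\;{\mathsf{Univ}}(S,X),
$$
as required.

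I expect the only genuine subtlety to lie in the first step: the validity of Diaconescu's theorem for the target topos ${\mathsf{Sh}}(X)$ requires the correct internal notion of a flat functor valued in a topos and the fact that flatness is equivalent to filteredness in that generality. This is exactly the content signalled in the remark after Theorem \ref{th:filt}, and the matching of the stalkwise condition (PB1)--(PB3) with internal flatness is what makes ``principal $L(S)$-bundle'' the right object here. Everything else is routine: compatibility of the composite on morphisms reduces to the observation that the Diaconescu functors, together with $\tau$ and $\rho$, send natural transformations to natural transformations, which is immediate from their explicit descriptions.
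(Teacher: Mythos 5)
Your proposal is correct and follows essentially the same route as the paper, which derives the corollary by combining the ${\mathsf{Sh}}(X)$-analogue of Theorem \ref{th:filt} (Diaconescu's theorem), the identification of filtered functors $L(S)\to{\mathsf{Sh}}(X)$ with principal $L(S)$-bundles, and the equivalence ${\mathsf{Prin}}(S,X)\simeq{\mathsf{Univ}}(S,X)$ of Theorem \ref{th:sheaves}. (The paper's text cites Theorem \ref{th:equiv} here, which appears to be a slip for Theorem \ref{th:sheaves}; your reading is the intended one.)
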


\begin{example}[A universal bundle associated with the groupoid of filters of $S$]

{\em Let $S$ be an inverse semigroup. A {\em filter} in $S$ is a filter with respect to the natural partial order in $S$, that is, a nonempty subset $F$ of $S$ such that
\begin{enumerate}
\item[(F1)] $a\in F$ and $b\geq a$ imply that $b\in S$;
\item[(F2)] if $a,b\in F$ then there is $c\in F$ such that $c\leq a,b$.
\end{enumerate}

Let $E=E(S)$ and $\hat{E}$ denote the set of filters in $E$. A filter $F$ in $E$ defines a nonzero semilattice homomorphism, called a {\em semi-character}, $\varphi_F\colon E\to \{0,1\}$ such the inverse image of $1$ is $F$, and conversely, any nonzero semilattice homomorphism $\varphi\colon E\to \{0,1\}$ defines a filter $\varphi^{-1}(1)$  in $E$. These assignments are mutually inverse, so that the elements of $\hat{E}$ can be equivalently looked at as semi-characters. The space $\hat{E}$ is topologized as a subspace of the product space $\{0,1\}^E$ where $\{0,1\}$ is a discrete space. The space $\hat{E}$ is locally compact and is known as the {\em filter space} or the {\em semi-character space} of $S$.

Let ${\mathcal G}$ denote the set of filters in $S$. For $s\in S$ let 
$M(s)=\{F\in {\mathcal G}\colon s\in F\}.$
The set ${\mathcal G}$ is topologized by letting the sets $M(s)\cap M(s_1)^c \cap \dots \cap M(s_n)^c$, where $s,s_1,\dots s_n$, $n\geq 0$, to be a base of the topology. The connection between filters in $S$ and filters in $E$ was studied in detail in \cite{LMS}. If $F\in {\mathcal G}$, the assignment 
$$
\mathrm{d}(F)=\{\mathbf{d}(a)\colon a\in F\}\in \hat{E}
$$
defines a map $\mathrm{d}\colon {\mathcal G}\to \hat{E}$ which  is a local homeomorphism (in fact, this map is equivalent to the domain map of the {\em groupoid of filters} of $S$).

We have an action of $S$ on each stalk ${\mathcal G}_F$ of the \'etale space $({\mathcal G}, {\mathrm{d}}, X)$ which is just  the universal action of $S$ on the set of cosets with respect to the closed inverse subsemigroup $F^{\uparrow}$.
It is routine to verify that these actions define on ${\mathcal G}$ the structure of a universal $S$-bundle which is natural to call the {\em universal bundle associated to the domain map of the groupoid of filters of} $S$.}
\end{example}

\section{Towards actions of inverse semigroups in an arbitrary topos}\label{sec:fin}

In \cite[Definition 2.14]{FH}, Funk and Hofstra proposed a way to define a notion of a torsor for an arbitrary inverse semigroup $S$ in an arbitrary (Grothendieck) topos. Their definition \cite[Definition 2.14]{FH} is based on the concept of a semigroup $S$-set in an arbitrary topos: for an inverse semigroup $S$ they consider an internal semigroup $\Delta(S)$.
In the topos of sets, a semigroup $S$-set $X$ is  a (pre)homomorphism from $S$ to the partial transformation semigroup ${\mathcal{PT}}(X)$ on $X$. For $S$ inverse, only semigroup $S$-sets for which the action is by partial bijections should be considered. The diagrammatic definition of partial bijections is not written in \cite{FH}, but can be done. Omitting the requirement of partial bijections leads to an incorrect claim in \cite{FH}. 

Section~6 of \cite{FH} discusses the actions of inverse semigroups in an arbitrary topos.  However, the claim `If $T$ is inverse, then a $T$-set $T\to M(X)$ necessarily factors through $I(X)\subseteq M(X)$' is incorrect (where $I(X)$ is `the object of partial bijections'). We now provide an example that, for the topos of sets where the meaning of ${\mathcal I}(X)$ is clear (the symmetric inverse semigroup on $X$), shows the claim to be incorrect.

\begin{example}\label{ex:counterexample}{\em
Let $S$ be a linearly ordered set considered as a semilattice and let $|S|>1$. The map $\mu\colon S \to {\mathcal{PT}}(S)$ given by $x\mapsto \varphi_x$, where $\varphi_x(y)= x\wedge y$, $y\in S$ is a homomorphism, but this is not an inverse semigroup $S$-set as the action is not by partial bijections. It is also easy to check that $\mu$ is free and transitive according to \cite[Definition 2.8]{FH} and \cite[Definition~2.14]{FH}. 
}
\end{example}

If $S$ is an inverse semigroup, then the internal semigroup $\Delta(S)$ can be readily endowed with the structure of an `internal inverse semigroup' as $\Delta$ preserves the logic needed to express the fact of being an inverse semigroup (e.g. the varietal definition). Thus the approach taken in \cite{FH} of internalizing $S$ as a semigroup looks simpler than the possible  internalizing it as an inverse semigroup. We, however, believe, that it is more natural, e.g., from the perspective of  the cohomology theory \cite{Log}, to keep the idempotents of $S$ and to internalize ${\mathcal H}$-classes of $S$ and the connection between them. This is the approach we outline below.

Let ${\mathcal{E}}$ be a (Grothendieck) topos and  $S$ an inverse semigroup.  We define an action of an inverse semigroup in an arbitrary topos which arises from a functor $L(S)\to {\mathcal{E}}$. 
%%%%%%%%%%%
Then  classes of functors $L(S)\to {\mathcal{E}}$ (such as pullback preserving functors, torsion-free functors or filtered functors) can be connected with respective classes of actions of $S$ in ${\mathcal{E}}$. In particular, actions connected to filtered functors, can be naturally called $S$-torsors.

Let $e,f\in E(S)$ be such that $e\mathrel{\mathcal D} f$. By $H(e,f)$ be denote the ${\mathcal H}$-class of $S$ which consists of all $s\in S$ satisfying  ${\mathbf{d}}(s)=f$ and ${\mathbf{r}}(s)=e$. Note that any ${\mathcal H}$-class of $S$ is of the form $H(e,f)$ for some $e\mathrel{\mathcal D} f$. We can bring all the sets $H(e,f)$ up to ${\mathcal{E}}$ by considering their images $\Delta H(e,f)$ under the constant sheaf functor $\Delta$. 

Let $A\colon L(S)\to {\mathcal{E}}$ be a functor. The colimit construction, given in \cite{FH} for the topos of sets, extends to ${\mathcal{E}}$, and we construct the colimit object ${\mathcal X}$ of the
composite of the functors
$$
E(S)\to L(S)\stackrel{A}{\to} {\mathcal{E}}.
$$
In particular, all objects $A(e)$ are subobjects of ${\mathcal X}$. Therefore, a morphism $A(e)\to A(f)$ in ${\mathcal{E}}$ can be thought of as a `partial' morphism of ${\mathcal X}$. Note that if $S$ is a monoid with unit $1$ then 
we have ${\mathcal X}=A(1)$.
The restriction of $A$ to $E(S)$ gives us a functor $E(S)\to {\mathcal{E}}$. If $S$ is a group, this functor just selects an objects in ${\mathcal{E}}$, in particular, the object ${\mathcal X}$. Reasoning similarly as in \cite[p.~432]{MM}, we see that for each ${\mathcal H}$-class $H(e,f)$ the functor $A$ gives rise to a map
\begin{multline}\label{eq:manipulation}
H(e,f)\to {\mathrm{Hom}}_{{\mathcal{E}}}(A(f),A(e)) \simeq {\mathrm{Hom}}_{{\mathcal{E}}}(1, A(e)^{A(f)})\simeq \\ {\mathrm{Hom}}_{{\mathcal{E}}}(\Delta 1, A(e)^{A(f)})\simeq {\mathrm{Hom}}_{\mathsf{Sets}}(1, \Gamma(A(e)^{A(f)}))\simeq \Gamma(A(e)^{A(f)})
\end{multline}
(here $1$ denotes the terminal object of ${\mathcal E}$).
We obtain the map 
\begin{equation}
\label{eq:manip1}
\Delta H(e,f) \to A(e)^{A(f)},
\end{equation} and applying the adjunction between product and exponentiation,
\begin{equation}\label{eq:manip2}
\Delta H(e,f) \times A(f)\to A(e).
\end{equation} 

We recall that every morphism in $L(S)$ is a composition of some $({\mathbf{r}}(s),s)$ and some $(e,f)$, where $e,f\in E(S)$. Therefore, a functor $A\colon L(S)\to {\mathcal E}$ is determined by its restriction to $E(S)$ and by translations along isomorphisms $({\mathbf{r}}(s),s)$. The restriction of $A$ to $E(S)$ in the group case degenerates to selecting an object in ${\mathcal E}$, so it is natural to keep this restriction as a part of the definition of an $S$-set associated to $A$ in ${\mathcal E}$. The translations along isomorphisms are internalized  using \eqref{eq:manipulation}, \eqref{eq:manip1} and~\eqref{eq:manip2}.

\section*{Acknowledgements} We thank Andrej Bauer and Alex Simpson for useful discussions. We are also grateful to  Jonathon Funk and Pieter Hofstra for helpful communication, to the referee for their comments, as well as the editor for facilitating a fruitful discussion.


\begin{thebibliography}{99}

\bibitem{Awo} S. Awodey,  {\em Category theory}, Oxford University Press, 2006.

\bibitem{F} J.~Funk, Semigroups and toposes,  {\em Semigroup Forum} {\bf 75} (2007), no. 3, 481--520. 


\bibitem{FH} J.~Funk, P.~Hofstra, Topos theoretic aspects of semigroups actions, {\em Theory Appl. Cat.} {\bf 24}  (2010), no.7,  117--147.

\bibitem{private_comm}  J.~Funk, P.~Hofstra, private communication. 

\bibitem{FLS} J.~Funk, M.~V.~Lawson, B.~Steinberg, Characterizations of Morita equivalent inverse semigroups,  {\em J. Pure Appl. Algebra} {\bf 215} (2011), 2262--2279.

\bibitem{FS} J.~Funk, B.~Steinberg, The universal covering of an inverse semigroup, {\em Appl. Categ. Structures}  {\bf 18}  (2010),  no. 2, 135--163. 

\bibitem{H} J.~M.~Howie, {\em Fundamentals of semigroup theory}, The Clarendon Press, Oxford University Press, New York, 1995.

\bibitem{KL} G.~Kudryavtseva, M.~V.~Lawson. The classifying space of an inverse semigroup, {\em Period. Math. Hungar.}, {\bf 70} (1) (2015), 122--129.


\bibitem{L} M.~V. Lawson, {\em Inverse semigroups: the theory of partial symmetries}, World Scientific Publishing Co., Inc.,  NJ, 1998.

\bibitem{LMS} M.~V. Lawson, S.~W. Margolis and B. Steinberg, The \'etale groupoid of an inverse semigroup as a groupoid of filters,  {\em J. Aust. Math. Soc.} {\bf 94} (2014),  234--256.

\bibitem{Leech} J.~Leech, $\mathcal{H}$-coextensions of monoids, {\em Mem. Amer. Math. Soc.} {\bf 1} (2), no. 157 (1975), 1--66.

\bibitem{Log} M.~Loganathan, Cohomology of inverse semigroups, {\em J. Algebra} {\bf 70} (1981), 375--393.

\bibitem{MacL} 
S. Mac Lane,  {\em Categories for the working mathematician,} Grad. Texts in Math., vol.~5, Springer-Verlag, 1998.

\bibitem{MM} S.~Mac Lane, I.~Moerdijk, {\em Sheaves in geometry and logic. A first introduction to topos theory}, Springer-Verlag, 1994.

\bibitem{M} I.~Moerdijk, {\em Classifying spaces and classifying topoi}, Lecture Notes in Math 1616, Springer-Verlag, 1995. 



\bibitem{Sch} B. M. Schein, Representations of generalized groups, {\em Izv. Vyssh. Uchebn. Zaved Mat.} {\bf 28} (3) (1962),  164--176 (in Russian).

\bibitem{St} B. Steinberg, Strong Morita equivalence of inverse semigroups, {\em Houston J. Math.} {\bf 37} (2011), 895--927.


\end{thebibliography}
\end{document}